\title[Automorphismes naturels]{Automorphismes naturels de l'espace de Douady de points sur une surface}
\author{Samuel Boissi\`ere}
\date{\today}
\address{Laboratoire J.A.Dieudonn\'e UMR CNRS 6621,
         Universit\'e de Nice Sophia-Antipolis, Parc Valrose, 06108 Nice}
\email{sb@math.unice.fr}
\urladdr{http://math.unice.fr/$\sim$sb/}
\subjclass{14C05}
\keywords{Sch\'ema de Hilbert, automorphismes, points fixes}
\newtheorem{example}{Exemple}
\newtheorem{definition}{D\'efinition}
\newtheorem{lemma}{Lemme}
\newtheorem{proposition}{Proposition}
\newtheorem{corollaire}{Corollaire}
\newtheorem{remark}{Remarque}
\newtheorem{conjecture}{Conjecture}
\DeclareMathOperator{\Aut}{Aut}
\DeclareMathOperator{\Der}{Der}
\DeclareMathOperator{\End}{End}
\DeclareMathOperator{\Hom}{Hom}
\DeclareMathOperator{\Pic}{Pic}
\DeclareMathOperator{\id}{id}
\DeclareMathOperator{\ent}{e}
\DeclareMathOperator{\HilbS}{S^{[\bullet]}}
\DeclareMathOperator{\trace}{trace}
\DeclareMathOperator{\rg}{rg}
\DeclareMathOperator{\res}{res}
\newcommand{\vac}{|0\rangle}
\newcommand{\cD}{\mathcal{D}}
\newcommand{\cI}{\mathcal{I}}
\newcommand{\cL}{\mathcal{L}}
\newcommand{\cO}{\mathcal{O}}
\newcommand{\cZ}{\mathcal{Z}}
\newcommand{\IC}{\mathds{C}}
\newcommand{\IF}{\mathbb{F}}
\newcommand{\IH}{\mathbb{H}}
\newcommand{\IN}{\mathds{N}}
\newcommand{\IP}{\mathbb{P}}
\newcommand{\IZ}{\mathds{Z}}
\newcommand{\kf}{\mathfrak{f}}
\newcommand{\kh}{\mathfrak{h}}
\newcommand{\kg}{\mathfrak{g}}
\newcommand{\kq}{\mathfrak{q}}
\newcommand{\kS}{\mathfrak{S}}
\DeclareFontFamily{OT1}{pzc}{}
\DeclareFontShape{OT1}{pzc}{m}{n}{<-> s * [1.25] pzcmi7t}{}
\DeclareMathAlphabet{\mathpzc}{OT1}{pzc}{m}{n}
\newcommand{\pzm}{\mathpzc{m}}
\newcommand{\ie}{{\it i.e. }}
\begin{document}

\begin{abstract}
On \'etablit quelques r\'esultats g\'en\'eraux relatifs \`a la taille du groupe d'automorphismes de l'espace de Douady de points sur une surface, puis on \'etudie quelques propri\'et\'es des automorphismes provenant d'un automorphisme de la surface, en particulier leur action sur la cohomologie et la classification de leurs points fixes.
\end{abstract}

\maketitle

\section*{Introduction}

Autant l'on sait dire beaucoup sur les automorphismes des surfaces K3, notamment gr\^ace au th\'eor\`eme de Torelli global, autant en dimension sup\'erieure le groupe d'automorphismes des vari\'et\'es symplectiques holomorphes irr\'eductibles est plus d\'elicat \`a \'etudier (voir \cite{Huybrechts}). Ce constat est la motivation premi\`ere de cet article, au sens o\`u si $S$ est une surface K3, l'espace de Douady de $n$ points sur $S$, not\'e $S^{[n]}$, est une vari\'et\'e symplectique holomorphe irr\'eductible particuli\`erement bien connue. Dans cet article, on \'etudie ce groupe d'automorphismes pour une surface quelconque (le cas des surfaces K3 est trait\'e dans \cite{BS}).

Dans une premi\`ere partie, on r\'epond \`a quelques questions g\'en\'erales relatives \`a ce groupe d'automorphismes. Principalement, on montre  que :
\begin{itemize}
\item pour toute surface $S$ et tout $n\geq 1$, $\dim\Aut(S^{[n]})=\dim\Aut(S)$;
\item si $S$ est une surface K3 non alg\'ebrique g\'en\'erique, pour tout $n\geq 1$ on a $\Aut(S^{[n]})\cong\Aut(S)$.
\end{itemize}
Ces r\'esultats motivent la seconde partie o\`u l'on \'etudie plus en d\'etail, pour une surface $S$ quelconque, les automorphismes de $S^{[n]}$  provenant d'un automorphisme de $S$ (automorphismes dits \emph{naturels}) : leur action sur la cohomologie, leurs nombres de Lefschetz et la classification de leurs points fixes.

Je remercie Alessandra Sarti de m'avoir incit\'e \`a \'etudier ces questions et pour son soutien lors de la pr\'eparation de cet article, et Arnaud Beauville, Serge Cantat, Antoine Ducros et Manfred Lehn pour les r\'eponses qu'ils m'ont apport\'ees.

\section{Espace de Douady de points sur une surface}

\subsection{Notations et d\'efinitions}

Soit $S$ une surface analytique complexe connexe, compacte et lisse. Pour tout entier $n\geq~0$, notons $S^{(n)}:=S^n/\kS_n$ le quotient
sym\'etrique de $S$, o\`u le groupe sym\'etrique $\kS_n$ agit par permutation des variables, $\pi\colon S^n\to S^{(n)}$ l'application quotient, $\Delta=\bigcup_{i,j}\Delta_{i,j}$ la r\'eunion de toutes les diagonales $\Delta_{i,j}=\{(x_1,\ldots,x_n)\in S^n\,|\,x_i=x_j\}$ et $D:=\pi(\Delta)$ son image dans $S^{(n)}$. La vari\'et\'e $S^{(n)}$ param\`etre les cycles analytiques de dimension z\'ero et de longueur $n$ sur $S$, et est singuli\`ere en chaque point de $\Delta$. Notons $S^{[n]}$ l'\emph{espace de Douady} (\emph{sch\'ema de Hilbert} lorsque $S$ est alg\'ebrique) param\'etrant les sous-espaces analytiques de $S$ de dimension $0$ 
et de longueur $n$. $S^{[n]}$ est une vari\'et\'e analytique complexe compacte lisse de dimension $2n$ (observons que $S^{[0]}$ est un point et $S^{[1]}\cong S$). Le \emph{morphisme de Douady-Barlet} (\emph{de Hilbert-Chow} dans le cas alg\'ebrique) $\rho\colon S^{[n]}\to S^{(n)}$ est projectif et bim\'eromorphe, c'est une r\'esolution des singularit\'es dont nous notons $E:=\rho^{-1}(D)$ le diviseur exceptionnel. Posons $\HilbS:=\coprod_{n\geq 0}S^{[n]}$. Nous nous r\'ef\'erons \`a Grothendieck \cite{Grothendieck} et Fogarty \cite{Fogarty1,Fogarty2} dans le cas alg\'ebrique, \`a Douady \cite{Douady} et de Cataldo{\&}Migliorini \cite{dCM} dans le cas analytique.

\subsection{Automorphismes naturels}

Soit un automorphisme $f\colon\HilbS\xrightarrow{\sim}\HilbS$. Sa restriction \`a chaque composante connexe $S^{[n]}$ est un isomorphisme d'image une composante connexe de m\^eme dimension, donc $f_{|S^{[n]}}=:f_n$ est un automorphisme de $S^{[n]}$. Ainsi, la donn\'ee de $f$ consiste exactement en la donn\'ee d'une famille $(f_n)_{n\geq 0}$ o\`u chaque $f_n$ est un automorphisme de $S^{[n]}$.

Une mani\`ere naturelle de construire un automorphisme de $S^{[n]}$ consiste \`a partir d'un automorphisme de $S$ : si $f\in\Aut(S)$, il induit pour tout $n\geq 1$ un automorphisme de $S^{[n]}$ not\'e $f^{[n]}$ d\'efini par $f^{[n]}(\xi):=f(\xi)$ o\`u $\xi$ est vu \`a gauche comme point de $S^{[n]}$ et \`a droite comme sous-espace analytique de $S$.

Pour $\xi,\xi'\in\HilbS$, la notation $\xi\subset\xi'$ signifie que $\xi$ est un sous-espace analytique de $\xi'$ dans $S$.

\begin{definition}
Un automorphisme $f$ de $\HilbS$ est dit \emph{naturel} s'il v\'erifie pour tous $\xi,\xi'\in\HilbS$ :
$$
\xi\subset\xi' \Longrightarrow f(\xi)\subset f(\xi').
$$
\end{definition}

Notons $\Xi_n\subset S\times S^{[n]}$ la famille universelle, dont les points sont les couples $(x,\xi)\in~S\times S^{[n]}$ tels que $x$ appartient au support de $\xi$ (nous noterons $x\in\xi$). Pour tous entiers $n\geq 0$ et $k\geq 1$, notons $S^{[n,n+k]}$ la sous-vari\'et\'e de $S^{[n]}\times S^{[n+k]}$ dont les points sont les couples $(\xi,\xi')\in S^{[n]}\times S^{[n+k]}$ tels que $\xi\subset\xi'$. Pour $k=1$, $S^{[n,n+1]}$ est lisse de dimension $2n$.

\begin{lemma} Soit $f=(f_n)_{n\geq 0}$ un automorphisme de $\HilbS$. Les assertions suivantes sont \'equivalentes :
\begin{enumerate}
\item\label{item1} $f$ est un automorphisme naturel.
\item\label{item2} $f$ respecte les vari\'et\'es d'incidence : $\forall n,k$, $(f_n\times f_{n+k})(S^{[n,n+k]})=S^{[n,n+k]}$;
\item\label{item3} $f$ respecte les familles universelles : $\forall n$, $(f_1\times f_{n})(\Xi_n)=\Xi_n$;
\item\label{item4} $f$ provient d'un automorphisme de $S$ : $\forall n$, $f_n=f_1^{[n]}$.
\end{enumerate}
\end{lemma}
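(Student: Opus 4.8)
The plan is to organize the four assertions around item \eqref{item4}, proving the chain $(4)\Rightarrow(2)\Rightarrow(3)\Rightarrow(4)$ together with the equivalence $(4)\Leftrightarrow(1)$; this yields all the equivalences at once. The starting observation is that, under the identification $S^{[1]}\cong S$, a point $x\in S$ viewed as the reduced length-$1$ subscheme satisfies $x\subset\xi$ if and only if $x\in\Support(\xi)$, so that the universal family $\Xi_n$ is exactly the incidence variety $S^{[1,n]}$. I will use repeatedly that the open locus $S^{[n]}_*$ of reduced subschemes (those whose support consists of $n$ distinct points) is dense in $S^{[n]}$, and that on $S^{[n]}_*$ a subscheme is completely determined by its support.

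For $(4)\Rightarrow(2)$, set $g:=f_1\in\Aut(S)$, so that $f_n=g^{[n]}$ for all $n$. If $\xi\subset\xi'$ then $g(\xi)\subset g(\xi')$, since an automorphism of $S$ carries subschemes to subschemes respecting inclusion; hence $(f_n\times f_{n+k})(S^{[n,n+k]})\subseteq S^{[n,n+k]}$. Applying the same remark to $g^{-1}$, whose associated automorphism is the inverse $(g^{-1})^{[n]}\times(g^{-1})^{[n+k]}$, gives the reverse inclusion, whence equality. The implication $(2)\Rightarrow(3)$ is then immediate: it is the special case of $(2)$ with first length $1$ and $k=n-1$, using $\Xi_n=S^{[1,n]}$ (the case $n=1$ being trivial).

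The core of the proof is $(3)\Rightarrow(4)$, and this is where I expect the main difficulty to lie: the hypothesis $(f_1\times f_n)(\Xi_n)=\Xi_n$ only controls \emph{supports}, namely it gives $x\in\Support(\xi)\Rightarrow g(x)\in\Support(f_n(\xi))$ with $g:=f_1$, whereas the desired conclusion $f_n=g^{[n]}$ is an equality of subschemes carrying full scheme structure. I would bridge this gap on the reduced locus first. For $\xi=\{x_1,\dots,x_n\}\in S^{[n]}_*$ the points $g(x_1),\dots,g(x_n)$ are $n$ distinct elements of $\Support(f_n(\xi))$; since $f_n(\xi)$ has length $n$, its support has at most $n$ points, so it equals $\{g(x_1),\dots,g(x_n)\}$ and $f_n(\xi)$ is forced to be reduced, i.e. $f_n(\xi)=g^{[n]}(\xi)$. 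Thus $f_n$ and $g^{[n]}$ coincide on $S^{[n]}_*$; being continuous and agreeing on a dense subset with $S^{[n]}$ Hausdorff, they coincide everywhere, giving $(4)$.

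Finally, $(4)\Rightarrow(1)$ is clear, as $f_n=g^{[n]}$ and $g\in\Aut(S)$ preserves inclusions of subschemes. For $(1)\Rightarrow(4)$ I would feed naturalness directly into the preceding argument: applying the definition to a reduced point $x\subset\xi$ yields $f_1(x)\subset f_n(\xi)$, that is $g(x)\in\Support(f_n(\xi))$, which is exactly the support control used in $(3)\Rightarrow(4)$, so the same density argument gives $f_n=g^{[n]}$. The expected obstacle is therefore concentrated in this rigidity step: passing from information about supports, which is all that incidence and naturalness see directly, to an equality of automorphisms. The passage is made possible by the density of $S^{[n]}_*$ together with the fact that two continuous maps into a Hausdorff space that agree on a dense set agree everywhere.
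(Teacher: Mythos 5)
Your proposal is correct and follows essentially the same route as the paper: the key step in both is the implication from preserving $\Xi_n=S^{[1,n]}$ to $f_n=f_1^{[n]}$, proved by checking the equality on the dense open locus of reduced subschemes (where support control forces $f_n(\xi)=f_1(\xi)$) and concluding by density. Your reorganization of the implications around (4), and your explicit remark that the inverse automorphism yields the reverse inclusion in (2), are only minor presentational differences from the paper's cycle $(1)\Rightarrow(2)\Rightarrow(3)\Rightarrow(4)\Rightarrow(1)$.
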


\begin{proof}\text{}
\begin{itemize}
\item[(\ref{item1})$\Rightarrow$(\ref{item2})] Soit $(\xi,\xi')\in S^{[n,n+k]}$. On a $\xi\subset\xi'$ et puisque $f$ est naturel :
$$f_n(\xi)=f(\xi)\subset f(\xi')=f_{n+k}(\xi'),$$ donc $(f_n\times f_{n+k})(\xi,\xi')=(f_n(\xi),f_{n+k}(\xi'))\in S^{[n,n+k]}$.

\item[(\ref{item2})$\Rightarrow$(\ref{item3})] C'est imm\'ediat puisque $\Xi_n=S^{[1,n]}$.

\item[(\ref{item3})$\Rightarrow$(\ref{item4})] Il suffit de v\'erifier l'\'egalit\'e $f_n=f_1^{[n]}$ sur l'ouvert dense de $S^{[n]}$ constitu\'e des sous-espaces port\'es en $n$ points distincts de $S$. Soit $\xi=\{x_1,\ldots,x_n\}$ un tel sous-espace. Pour tout $i$, $x_i\in\xi$ et la condition $(f_1\times f_n)(\Xi_n)=\Xi_n$ implique $f_1(x_i)\in f_n(\xi)$. Puisque les points $f_1(x_1),\ldots,f_1(x_n)$ sont distincts, on obtient $f_n(\xi)=f_1(\xi)=f_1^{[n]}(\xi)$. 

\item[(\ref{item4})$\Rightarrow$(\ref{item1})] Soit $(\xi,\xi')\in S^{[n,n+k]}$. Avec les notations pr\'ec\'edentes :
$$
f(\xi)=f_n(\xi)=f_1^{[n]}(\xi)=f_1(\xi)\subset f_1(\xi')=f_1^{[n+k]}(\xi')=f_{n+k}(\xi')=f(\xi').
$$
\end{itemize}
\end{proof}

Observons que l'assertion (\ref{item2}) est \'equivalente \`a l'assertion :
\begin{center}
(\ref{item2}') $f$ respecte la vari\'et\'e d'incidence $S^{[n,n+1]}$ : $\forall n$, $(f_n\times f_{n+1})(S^{[n,n+1]})=S^{[n,n+1]}$.
\end{center}
En effet, (\ref{item2})$\Rightarrow$(\ref{item2}') est imm\'ediat et r\'eciproquement, si $(\xi,\xi')\in S^{[n,n+k]}$, on peut construire une tour de sous-espaces $\xi_0=\xi\subset\xi_1\subset\cdots\subset\xi_{k-1}\subset\xi_k=\xi'$ avec $(\xi_i,\xi_{i+1})\in S^{[n+i,n+i+1]}$ pour tout $i=0,\ldots,k-1$. L'assertion (\ref{item2}') appliqu\'ee \`a cette tour fournit :
$$
f_n(\xi)\subset f_{n+1}(\xi_1)\subset\cdots\subset f_{n+k-1}(\xi_{k-1})\subset f_{n+k}(\xi'),
$$
donc $(f_n(\xi),f_{n+k}(\xi'))\in S^{[n,n+k]}$.

\section{Le groupe d'automorphismes de l'espace de Douady de points}

\subsection{Rappel sur les groupes d'automorphismes}

Soit $X$ un espace analytique complexe connexe, compact et lisse. Soit $D_X$ l'espace analytique des sous-espaces analytiques compacts de $X\times X$. D'apr\`es Douady \cite{Douady}, l'espace des applications holomorphes de $X$ dans lui-m\^eme s'identifie -- en prenant le graphe -- \`a un ouvert de $D_X$ dont le groupe $\Aut(X)$ des automorphismes est \`a nouveau un ouvert. D'apr\`es Kerner \cite{Kerner} et Bochner-Montgomery \cite{BM}, $\Aut(X)$ est un groupe de Lie complexe localement compact. D'apr\`es Fujiki \cite{Fujiki}, $X$ ayant une base d\'enombrable d'ouverts, c'est aussi le cas de $D_X$, donc $\Aut(X)$ est d\'enombrable \`a l'infini. En particulier, le nombre de composantes connexes de $\Aut(X)$ est au plus d\'enombrable. L'alg\`ebre de Lie de $\Aut(X)$ s'identifie naturellement \`a l'espace des champs de vecteurs globaux sur $X$, donc $\dim\Aut(X)=\dim H^0(X,T_X)$ (not\'e $h^0(X,T_X)$) o\`u $T_X$ d\'esigne le fibr\'e tangent de $X$, et toutes les composantes connexes ont m\^eme dimension (finie). Si cette dimension est nulle, $\Aut(X)$ est alors totalement discontinu et au plus d\'enombrable.

\subsection{Groupe des automorphismes naturels}

Consid\'erons la famille universelle double $\Xi_n\times\Xi_n\subset S\times S\times S^{[n]}\times S^{[n]}$ (on a permut\'e deux facteurs) :
$$
\Xi_n\times\Xi_n=\{(x,x',\xi,\xi')\,|\,x\in\xi,x'\in\xi'\}
$$
munie des projections $p$ et $q$ vers $S\times S$ et $S^{[n]}\times S^{[n]}$ respectivement. A tout sous-espace analytique compact $\Gamma\subset S\times S$ on associe alors l'espace analytique compact $q(p^{-1}(\Gamma))\subset S^{[n]}\times S^{[n]}$. Cette construction se faisant naturellement en famille, on obtient une application holomorphe $D_S\to D_{S^{[n]}}$. Si $\Gamma$ est le graphe d'un isomorphisme $f$ de $S$,  $q(p^{-1}(\Gamma))$ est le graphe de $f^{[n]}$ donc ce morphisme se restreint en un morphisme de groupes de Lie injectif $\Aut(S)\to\Aut(S^{[n]})$ d'image ferm\'ee puisque ces groupes sont localement compacts (\cite[1.3]{Godement}). Avec un l\'eger abus de langage, nous appellerons ce groupe image le \emph{groupe des automorphismes naturels de $S^{[n]}$}.

\subsection{Dimension du groupe d'automorphismes} 

Pour toute vari\'et\'e analytique complexe lisse compacte $X$ de dimension $d$, notons $\Omega_X:=T_X^\vee$ le fibr\'e vectoriel des formes diff\'erentielles de degr\'e $1$ sur $X$, pour $2\leq p\leq d-1$ notons $\Omega_X^p:=\wedge^p\Omega_X$ et $\omega_X:=\wedge^d\Omega_X$ son fibr\'e canonique. Les nombres de Hodge de $X$ sont par d\'efinition $h^{p,q}(X):=\dim H^q(X,\Omega_X^p)$. Plus g\'en\'eralement, pour tout fibr\'e inversible $L$ sur $X$, nous introduisons les \emph{nombres de Hodge de $X$ \`a valeurs dans $L$} :  $h^{p,q}(X,L):=\dim H^q(X,\Omega^p_X\otimes L)$. Par dualit\'e de Serre, on voit qu'ils v\'erifient :
\begin{align*}
h^{p,q}(X,L)&=h^q(X,\Omega^p_X\otimes L)\\
&=h^{d-q}(X,(\Omega^p_X\otimes L)^\vee\otimes \Omega^d_X)\\
&=h^{d-q}(X,\Omega^{d-p}_X\otimes L^\vee)\\
&=h^{d-p,d-q}(X,L^\vee).
\end{align*}

On observe que $h^0(S^{[n]},T_{S^{[n]}})~=h^{2n}(S^{[n]},\Omega_{S^{[n]}}\otimes \omega_{S^{[n]}})$. 
Rappelons la construction du morphisme de groupes naturel $-_n\colon\Pic(S)\to\Pic(S^{[n]})$. Notons $p_i\colon S^n\to S$ les projections sur les facteurs. Pour tout fibr\'e $L\in\Pic(S)$, le fibr\'e $\bigotimes_{i=1}^np_i^*L$ est naturellement $\kS_n$-lin\'earis\'e et descend en un fibr\'e $\cL\in\Pic(S^{(n)})$ puisque les groupes d'isotropie des points agissent trivialement sur les fibres au-dessus. On d\'efinit alors $L_n:=\rho^*\cL$. On peut montrer que $\omega_{S^{[n]}}=(\omega_S)_n$ (voir par exemple Nieper-Wisskirchen \cite{Nieper}), on a donc $h^0(S^{[n]},T_{S^{[n]}})=h^{2n}(S^{[n]},\Omega_{S^{[n]}}\otimes (\omega_S)_n)$.

La proposition suivante g\'en\'eralise le r\'esultat de G\"ottsche \cite[Proposition 3.3]{Goettsche} relatifs aux nombres de Hodge usuels.

\begin{proposition}
\label{prop:hodgeL}
Pour toute surface analytique compacte lisse $S$ et tout fibr\'e inversible $L$ sur $S$ on a :
$$
\sum_{n=0}^{+\infty}\sum_{p=0}^{2n}h^{p,0}(S^{[n]},L_n)x^p t^n=\frac{(1+xt)^{h^{1,0}(S,L)}}{(1-t)^{h^{0,0}(S,L)}(1-x^2t)^{h^{2,0}(S,L)}}.
$$
\end{proposition}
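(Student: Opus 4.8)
The plan is to reduce the computation of the twisted Hodge numbers $h^{p,0}(S^{[n]},L_n)$ to an invariant-theoretic count on $S^n$, and then to recognise the resulting generating function as a product indexed by the three possible form-degrees on the surface.

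First I would establish the key identification
$$
H^0(S^{[n]},\Omega^p_{S^{[n]}}\otimes L_n)\cong\Bigl(H^0(S^n,\Omega^p_{S^n}\otimes\textstyle\bigotimes_{i=1}^n p_i^*L)\Bigr)^{\kS_n}.
$$
Since $L_n=\rho^*\cL$, the projection formula gives $\rho_*(\Omega^p_{S^{[n]}}\otimes L_n)=(\rho_*\Omega^p_{S^{[n]}})\otimes\cL$, so the global sections are computed on $S^{(n)}$. A holomorphic $L_n$-valued $p$-form restricts to the big open set where $\rho$ is an isomorphism; there it is an $L$-valued invariant form on the smooth locus of $S^{(n)}$, hence pulls back to a $\kS_n$-invariant section on $S^n\setminus\Delta$, which extends over $S^n$ by Hartogs since $\Delta$ has codimension two in the smooth variety $S^n$. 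Conversely such an invariant form descends to $S^{(n)}$ and pulls back to $S^{[n]}$ over the big open set; the stated equality $\omega_{S^{[n]}}=(\omega_S)_n$ shows that $\rho$ is crepant, so the discrepancies vanish and the form extends holomorphically across the exceptional divisor $E$. Finally $\pi^*\cL=\bigotimes_{i}p_i^*L$ by the very construction of $\cL$, which yields the displayed isomorphism.

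Next I would apply the K\"unneth formula to the right-hand side:
$$
H^0(S^n,\Omega^p_{S^n}\otimes\textstyle\bigotimes_i p_i^*L)=\bigoplus_{p_1+\cdots+p_n=p}\ \bigotimes_{i=1}^n H^0(S,\Omega^{p_i}_S\otimes L),
$$
so that each tensor factor contributes its twisted Hodge number $h^{p_i,0}(S,L)$, with $p_i\in\{0,1,2\}$ since $S$ is a surface. The decisive combinatorial step is the computation of the $\kS_n$-invariants, and here lies the only real subtlety: transposing two factors carrying forms of degrees $p_i$ and $p_j$ introduces the sign $(-1)^{p_ip_j}$, so $\kS_n$ acts on the degree-$p$ part through the permutation representation twisted by the sign character on the odd-degree ($p_i=1$) slots. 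Consequently the invariant algebra is the symmetric algebra on the even-degree generators and the exterior algebra on the odd-degree ones.

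Tracking $n$ by $t$ and the internal form-degree by $x$, the generators in degree $0$ (bosonic, $h^{0,0}(S,L)$ of them) contribute $(1-t)^{-h^{0,0}(S,L)}$, those in degree $1$ (fermionic, $h^{1,0}(S,L)$ of them) contribute $(1+xt)^{h^{1,0}(S,L)}$, and those in degree $2$ (bosonic, $h^{2,0}(S,L)$ of them) contribute $(1-x^2t)^{-h^{2,0}(S,L)}$. Multiplying these three factors gives exactly the asserted identity; specialising $L=\cO_S$ recovers G\"ottsche's formula for the usual Hodge numbers. I expect the main obstacle to be the extension argument of the first step, namely showing that $L_n$-valued holomorphic forms extend across $E$, for which the crepancy of $\rho$ encoded in $\omega_{S^{[n]}}=(\omega_S)_n$ is the essential geometric input.
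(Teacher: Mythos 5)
Your route is the same as the paper's: identify $H^0(S^{[n]},\Omega^p_{S^{[n]}}\otimes L_n)$ with the $\kS_n$-invariants of $H^0\bigl(S^n,\Omega^p_{S^n}\otimes\bigotimes_{i}p_i^*L\bigr)$, then apply K\"unneth and the super-symmetric count; your final combinatorial paragraph is correct and in fact more explicit than the paper's \emph{un peu de combinatoire}. The gap lies in your justification of the key identification, exactly at the point you yourself single out as the main obstacle. The easy direction (restrict a form on $S^{[n]}$ to $S^{[n]}\setminus E\cong S^{(n)}\setminus D$, pull back to $S^n\setminus\Delta$, extend across the codimension-two set $\Delta$ by Hartogs) is fine and gives injectivity. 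But for surjectivity you assert that an invariant form, descended to $S^{(n)}\setminus D$ and pulled back to $S^{[n]}\setminus E$, extends across $E$ \emph{because} $\omega_{S^{[n]}}=(\omega_S)_n$ makes $\rho$ crepant, ``so the discrepancies vanish and the form extends''. This is a non sequitur for $0<p<2n$: crepancy controls the pole order along $E$ of pullbacks of forms of \emph{top} degree $2n$ only; it says nothing directly about intermediate-degree $p$-forms. (The true implication ``crepant resolution $\Rightarrow$ canonical $\Rightarrow$ reflexive $p$-forms extend'' exists, but it passes through the Greb--Kebekus--Kov\'acs--Peternell extension theorem for klt spaces, a deep result that cannot be summarized as ``the discrepancies vanish''.)

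What is actually needed --- and what the paper invokes --- is Steenbrink's theorem for resolutions of quotient singularities: $\rho_*\Omega^p_{S^{[n]}}=\widetilde{\Omega}^p_{S^{(n)}}$, the pushforward of $p$-forms from the smooth locus of $S^{(n)}$. If you want a self-contained substitute, the standard argument uses no crepancy at all: take a resolution $W$ of the main component of $S^n\times_{S^{(n)}}S^{[n]}$, with maps $a\colon W\to S^n$ and $b\colon W\to S^{[n]}$, $b$ proper and generically finite. The invariant holomorphic form on $S^n$ pulls back through $a$ to a holomorphic form on $W$ that agrees with $b^*$ of your form away from $b^{-1}(E)$; at a general point of each component of $E$ the map $b$ is finite with local normal form $(w_1,w_2,\dots)\mapsto(w_1^k,w_2,\dots)$, and holomorphy upstairs then bounds the pole order of each coefficient downstairs by $(k-1)/k<1$, hence by $0$, so the form extends across $E$. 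With that step repaired (or with the citation of Steenbrink, as in the paper), the remainder of your proof --- projection formula, K\"unneth, the sign rule $(-1)^{p_ip_j}$ forcing symmetric algebra on even generators and exterior algebra on odd ones, and the resulting product formula --- is correct.
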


\begin{proof}
Ce calcul est une g\'en\'eralisation directe de celui de G\"ottsche \cite{Goettsche}, donc nous expliquons simplement les points-cl\'es de l'argument. Notons $S^{(n)}_*:=~S^{(n)}\setminus~D$ l'ouvert lisse et $j\colon S^{(n)}_*\hookrightarrow S^{(n)}$ l'inclusion. Pour $p=0,\ldots,2n$ d\'efinissons $\widetilde{\Omega}^p_{S^{(n)}}:=j_*\Omega^p_{S^{(n)}_*}$. Puisque $\rho\colon S^{[n]}\to S^{(n)}$ est une r\'esolution des singularit\'es, on peut montrer que $\rho_*\Omega^p_{S^{[n]}}=\widetilde{\Omega}^p_{S^{(n)}}$ (voir Steenbrink \cite{Steenbrink}). On a alors~:
$$
H^0(S^{[n]},\Omega^p_{S^{[n]}}\otimes L_n)\cong H^0(S^{(n)}_*,\Omega^p_{S^{(n)}_*}\otimes \cL)\cong H^0\left(S^n,\Omega^p_{S^n}\otimes\bigotimes_{i=1}^np_i^*L\right)^{\kS_n}.
$$
Par la formule de K\"unneth, $H^0\left(S^n,\Omega^*_{S^n}\otimes\bigotimes_{i=1}^np_i^*L\right)\cong\bigotimes_{i=1}^np_i^*H^0(S,\Omega^*_S\otimes L)$. Soit $\omega_1,\ldots,\omega_m$ une base homog\`ene de $H^0(S,\Omega^*_S\otimes L)$ : pour tout $i=1,\ldots,m$ il existe $d_i\in~\{0,1,2\}$ tel que $\omega_i\in H^0(S,\Omega^{d_i}_S\otimes L)$. Une base de $H^0\left(S^n,\Omega^*_{S^n}\otimes\bigotimes_{i=1}^np_i^*L\right)^{\kS_n}$ est donc form\'ee des $\sum_{\sigma\in\kS_n}\sigma^*\eta$ pour $\eta$ de la forme $p_1^*\omega_{i_1}\wedge\ldots\wedge p_n^*\omega_{i_n}$ avec $\sum_{j=1}^n d_{i_j}=~p$. Un peu de combinatoire permet alors de conclure.
\end{proof}

\begin{corollaire}
\label{cor:dimension}
Pour toute surface analytique compacte lisse $S$ et tout $n\geq 1$  on a $\dim\Aut(S^{[n]})=\dim\Aut(S)$.
\end{corollaire}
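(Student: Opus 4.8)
The plan is to translate both sides into (twisted) Hodge numbers and then read the answer off the generating series of Proposition~\ref{prop:hodgeL}. By the discussion recalled above, $\dim\Aut(S^{[n]})=h^0(S^{[n]},T_{S^{[n]}})$ and $\dim\Aut(S)=h^0(S,T_S)$, so it suffices to compare these two spaces of global vector fields. The crucial starting point is that $h^0(S^{[n]},T_{S^{[n]}})$ has already been rewritten above, via Serre duality together with the identification $\omega_{S^{[n]}}=(\omega_S)_n$, as $h^{2n}(S^{[n]},\Omega_{S^{[n]}}\otimes(\omega_S)_n)$, which is exactly the twisted Hodge number $h^{1,2n}(S^{[n]},(\omega_S)_n)$.

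First I would apply the twisted Serre duality relation $h^{p,q}(X,L)=h^{d-p,d-q}(X,L^\vee)$ computed just before the proposition, in dimension $d=2n$ with $p=1$, $q=2n$ and $L=(\omega_S)_n$. Since $L\mapsto L_n$ is a group homomorphism $\Pic(S)\to\Pic(S^{[n]})$, one has $(\omega_S)_n^\vee=(\omega_S^\vee)_n$, which gives
$$
\dim\Aut(S^{[n]})=h^{2n-1,0}\bigl(S^{[n]},(\omega_S^\vee)_n\bigr).
$$
This last quantity is precisely the coefficient of $x^{2n-1}t^n$ in the generating series of Proposition~\ref{prop:hodgeL}, applied to the line bundle $L=\omega_S^\vee$.

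Next I would perform that coefficient extraction. Setting $a:=h^{0,0}(S,\omega_S^\vee)$ and $b:=h^{1,0}(S,\omega_S^\vee)$, and noting $h^{2,0}(S,\omega_S^\vee)=h^0(S,\omega_S\otimes\omega_S^\vee)=h^0(S,\cO_S)=1$ because $S$ is connected, the series becomes
$$
\frac{(1+xt)^{b}}{(1-t)^{a}(1-x^2t)}.
$$
A general monomial is a product of a term $\binom{b}{i}x^it^i$ from the numerator, a term of degree $t^j$ from $(1-t)^{-a}$, and a term $x^{2k}t^k$ from $(1-x^2t)^{-1}$; the two constraints $i+2k=2n-1$ and $i+j+k=n$ then force $k=n-1$, $i=1$, $j=0$. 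As the $(1-t)^{-a}$ factor contributes $1$ when $j=0$, the single surviving coefficient is $b$. Finally I would use that on a surface $\Omega_S$ has rank $2$, so $T_S=\Omega_S^\vee\cong\Omega_S\otimes\omega_S^\vee$, whence $b=h^0(S,\Omega_S\otimes\omega_S^\vee)=h^0(S,T_S)=\dim\Aut(S)$, concluding the proof.

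The only genuine obstacle is the bookkeeping in the extraction: one must verify that the degree constraints are compatible only for the single triple $(i,j,k)=(1,0,n-1)$, so that neither the anticanonical factor $(1-t)^{-a}$ nor higher powers of $(1+xt)$ leave any residue. Everything else is a formal consequence of Serre duality and of the multiplicativity of $L\mapsto L_n$; the real content is concentrated in the happy fact that, for a surface, $\Omega_S\otimes\omega_S^\vee$ is again the tangent bundle, which is exactly what makes the surviving coefficient equal to $\dim\Aut(S)$ rather than some unrelated twisted invariant.
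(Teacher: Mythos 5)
Your proof is correct and follows essentially the same route as the paper: rewrite $\dim\Aut(S^{[n]})=h^0(S^{[n]},T_{S^{[n]}})$ as $h^{2n-1,0}(S^{[n]},(\omega_S^\vee)_n)$ via $\omega_{S^{[n]}}=(\omega_S)_n$ and twisted Serre duality, then extract the coefficient of $x^{2n-1}t^n$ in Proposition~\ref{prop:hodgeL} for $L=\omega_S^\vee$ and identify the result with $h^0(S,T_S)$ using $T_S\cong\Omega_S\otimes\omega_S^\vee$. The only difference is cosmetic: the paper records the general formula $h^{2n-1,0}(S^{[n]},L_n)=h^{1,0}(S,L)\binom{n-1}{h^{2,0}(S,L)+n-2}$ before specializing, whereas you perform the extraction directly with $h^{2,0}(S,\omega_S^\vee)=1$, making explicit the bookkeeping the paper leaves as an ``examen attentif''.
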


\begin{proof}
Observons que 
$$
h^0(S^{[n]},T_{S^{[n]}})=h^{1,2n}(S^{[n]},(\omega_S)_n)=h^{2n-1,0}(S^{[n]},((\omega_S)_n)^\vee)=h^{2n-1,0}(S^{[n]},(\omega_S^\vee)_n).
$$
Un examen attentif de la formule donn\'ee dans la proposition \ref{prop:hodgeL} fournit :
$$
h^{2n-1,0}(S^{[n]},L_n)=h^{1,0}(S,L)\binom{n-1}{h^{2,0}(S,L)+n-2}.
$$
Or, on calcule que $h^{1,0}(S,\omega_S^\vee)=h^0(S,T_S)$ et $h^{2,0}(S,\omega_S^\vee)=h^{2,2}(S)=1$, ce qui donne $h^0(S^{[n]},T_{S^{[n]}})=h^0(S,T_S)$.
\end{proof}

Ce r\'esultat implique que les composantes connexes de l'identit\'e de $\Aut(S^{[n]})$ et $\Aut(S)$ sont isomorphes, et de fa\c{c}on intuitive : il n'y a ``pas trop'' d'automorphismes non naturels. Il est notable que la dimension de $\Aut(S^{[n]})$ ne d\'epende pas de $n$ !

\begin{remark}
Si $S$ est un tore ou une surface K3, le faisceau canonique $\omega_S$ est trivial et $S^{[n]}$ admet une structure symplectique \cite[proposition 5]{Beauvillec1nul} donc $T_{S^{[n]}}\cong~\Omega_{S^{[n]}}$ et $h^0(S^{[n]},T_{S^{[n]}})=h^{1,0}(S^{[n]})$. La formule de G\"ottsche cit\'ee plus haut donne imm\'ediatement  $h^0(T_{S^{[n]}})=h^{1,0}(S)=h^0(T_S)$ (qui vaut $2$ pour un tore complexe et $0$ pour une surface K3).
\end{remark}

La proposition \ref{prop:hodgeL} sugg\`ere la conjecture suivante g\'en\'eralisant la formule donnant les nombres de Hodge usuels conjectur\'ee par G\"ottsche \cite{Goettsche}, d\'emontr\'ee par G{\"o}ttsche\&Soergel \cite{GS} (voir aussi Cheah \cite{Cheah}) dans le cas alg\'ebrique et g\'en\'eralis\'ee par de Cataldo\&Migliorini \cite{dCM} aux surfaces analytiques k\"ahl\'eriennes :

\begin{conjecture}
Pour toute surface analytique k\"ahl\'erienne compacte lisse $S$ et tout fibr\'e inversible $L$ sur $S$ on a :
$$
\sum_{n=0}^\infty h^{p,q}(S^{[n]},L_n)x^py^qt^n=\prod_{k=1}^\infty\prod_{p=0}^2\prod_{q=0}^2 \left(\frac{1}{1-(-1)^{p+q}x^{p+k-1}y^{q+k-1}t^k}\right)^{(-1)^{p+q}h^{p,q}(S,L)}
$$
\end{conjecture}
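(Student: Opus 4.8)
The plan is to generalise the proof of the untwisted formula due to de Cataldo--Migliorini \cite{dCM}, the decisive new feature being that the twisting bundle $L_n=\rho^*\cL$ is pulled back from the symmetric product, so it can be pushed through the Douady--Barlet resolution by the projection formula. Since $L_n=\rho^*\cL$, the projection formula gives $R\rho_*(\Omega^p_{S^{[n]}}\otimes L_n)\cong(R\rho_*\Omega^p_{S^{[n]}})\otimes\cL$, whence
\[
H^q(S^{[n]},\Omega^p_{S^{[n]}}\otimes L_n)\cong\IH^q\bigl(S^{(n)},(R\rho_*\Omega^p_{S^{[n]}})\otimes\cL\bigr).
\]
Everything is thus reduced to understanding the complexes $R\rho_*\Omega^p_{S^{[n]}}$ on $S^{(n)}$ and their interaction with $\cL$. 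The case $q=0$ is exactly Proposition~\ref{prop:hodgeL}, which uses only $\rho_*\Omega^p_{S^{[n]}}=\widetilde{\Omega}^p_{S^{(n)}}$; and indeed specialising the conjectural product at $y=0$ kills every factor with $k\geq 2$ or $q\geq 1$ and returns precisely that proposition, a reassuring consistency check.

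The heart of the argument is a decomposition of $R\rho_*\Omega^p_{S^{[n]}}$ indexed by the strata of $S^{(n)}$. The morphism $\rho$ is semismall, and the strata are indexed by partitions: for $\nu=(1^{a_1}2^{a_2}\cdots)\vdash n$ the stratum $S^{(n)}_\nu$ is an open subset of $\prod_k S^{(a_k)}$ (cycles with $a_k$ clusters of multiplicity $k$), and the generic fibre of $\rho$ over it has dimension $\sum_k a_k(k-1)=n-\ell(\nu)$, which is half the codimension, confirming semismallness. First I would upgrade the de Cataldo--Migliorini decomposition of $R\rho_*\IQ_{S^{[n]}}$ to Saito's mixed Hodge modules and read off from the Hodge filtration the coherent pieces $R\rho_*\Omega^p_{S^{[n]}}$: each summand is supported on a closure $\overline{S^{(n)}_\nu}$ and contributes with an internal bidegree shift by $(k-1,k-1)$ per cluster of size $k$ (the shift being the fibre dimension just computed). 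Because $\cL$ descends from $\bigotimes_i p_i^*L$, its restriction to $S^{(n)}_\nu$ is an external product of the bundles attached to $L$ on the factors $S^{(a_k)}$, so on each stratum the twist decouples cluster by cluster.

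It then remains to compute a single stratum and to run the partition combinatorics. On $S^{(n)}_\nu$ a cluster of size $k$ contributes, after the shift $(k-1,k-1)$, exactly the $L$-twisted Hodge cohomology $\bigoplus_{p,q}H^q(S,\Omega^p_S\otimes L)$ of the surface, the ``single-particle'' space of bigraded dimension $\sum_{p,q}h^{p,q}(S,L)x^py^q$, sitting in $t$-degree $k$. Summing over the number $a_k\geq 0$ of clusters of size $k$ replaces this space by its full graded-symmetric algebra (the $\kS_{a_k}$-invariants of the external powers, exactly as in Proposition~\ref{prop:hodgeL}), where the Koszul sign rule makes classes with $p+q$ even behave as bosons, yielding a factor $(1-x^{p+k-1}y^{q+k-1}t^k)^{-h^{p,q}(S,L)}$, and classes with $p+q$ odd behave as fermions, yielding a factor $(1+x^{p+k-1}y^{q+k-1}t^k)^{h^{p,q}(S,L)}$. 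Multiplying over all $k\geq 1$ and all $(p,q)$ assembles precisely the announced infinite product.

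The main obstacle is the passage to coherent cohomology in the second step. De Cataldo--Migliorini extract the ordinary Hodge numbers $h^{p,q}(S^{[n]})$ from the \emph{mixed Hodge structure} on the total cohomology of $S^{[n]}$; but the twisted numbers $h^{p,q}(S^{[n]},L_n)$ are not the Hodge numbers of the cohomology of any space, so that formalism does not apply verbatim. One genuinely has to control the Hodge filtration on the Hodge modules appearing in the decomposition --- equivalently the associated graded sheaves $\mathrm{Gr}_F$ computing the $R\rho_*\Omega^p_{S^{[n]}}$ --- and to prove that the hypercohomology spectral sequence for $(R\rho_*\Omega^p_{S^{[n]}})\otimes\cL$ degenerates, so that the stratumwise contributions add without cancellation. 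Establishing this degeneration uniformly in $p,q,n$ and compatibly with the twist is the delicate technical point; the combinatorial assembly of the last step is then routine and identical to the untwisted case.
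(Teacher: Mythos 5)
You are attempting to prove a statement that the paper itself does not prove: it is stated as a \emph{Conjecture}, offered as the natural generalisation of G\"ottsche's formula (proved in the untwisted case $L=\cO_S$ by G\"ottsche--Soergel \cite{GS} and de Cataldo--Migliorini \cite{dCM}), and the only evidence given is its compatibility with Proposition~\ref{prop:hodgeL}, i.e.\ the $q=0$ specialisation. So there is no proof in the paper to compare yours against; the question is whether your sketch settles the conjecture, and it does not. By your own admission the decisive step --- controlling the Hodge filtration on the decomposition of $R\rho_*\Omega^p_{S^{[n]}}$ and proving the required degeneration uniformly and compatibly with the twist --- is left open, so what you have is a strategy, not a proof.

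More seriously, one of the steps you treat as routine is wrong, and fixing it changes the answer. You claim that on the stratum of cycles with clusters of multiplicity $k$ the twist ``decouples'' so that each cluster contributes $h^{p,q}(S,L)$. But $\cL$ descends from $p_1^*L\otimes\cdots\otimes p_n^*L$, so its restriction to a partial diagonal where $k$ points collide is induced by $L^{\otimes k}$, not $L$. Already for $n=2$: writing $\tilde X=Bl_\Delta(S^2)$ with blow-down $\sigma$ and double cover $\tau\colon\tilde X\to S^{[2]}$, one has $(\tau_*\Omega^p_{\tilde X})^{\kS_2}=\Omega^p_{S^{[2]}}$ and $\tau^*L_2=\sigma^*(L\boxtimes L)$, so the blow-up formula $R\sigma_*\Omega^p_{\tilde X}\cong\Omega^p_{S^2}\oplus(i_\Delta)_*\Omega^{p-1}_\Delta[-1]$ together with the projection formula gives
$$
h^{p,q}(S^{[2]},L_2)=\dim\left[H^q\bigl(S^2,\Omega^p_{S^2}\otimes(L\boxtimes L)\bigr)\right]^{\kS_2}+h^{p-1,q-1}(S,L^{\otimes 2}),
$$
since $(L\boxtimes L)|_\Delta\cong L^{\otimes 2}$ and the involution acts trivially on the exceptional summand. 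The conjectured product predicts $h^{p-1,q-1}(S,L)$ for that correction term; taking $S=\IP^2_\IC$, $L=\cO(1)$, $p=q=1$, the two differ ($h^{0,0}(S,L)=3$ versus $h^{0,0}(S,L^{\otimes 2})=6$). In other words, your own method, carried out correctly, yields the product with exponents $h^{p,q}(S,L^{\otimes k})$ in the $k$-th factor --- a formula that contradicts the statement you set out to prove (and suggests the conjecture itself should be amended in this way). Your consistency check at $y=0$ cannot detect this, because setting $y=0$ kills every factor with $k\geq 2$, which is exactly where the discrepancy lives.
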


\subsection{Cas particulier des surfaces K3}

Si $S$ est une surface K3, le corollaire~\ref{cor:dimension} dit que les automorphismes \emph{non naturels} $\Aut(S^{[n]})\setminus \Aut(S)$ sont au plus d\'enombrables. On en conna\^it cependant tr\`es peu, il semble que le seul exemple connu soit celui-ci : soit $S\in\IP^3_\IC$ une quartique g\'en\'erique ne contenant aucune droite. Pour tout point $\xi\in S^{[2]}$ il existe une unique droite de $\IP^3_\IC$ contenant $\xi\subset S$, qui recoupe $S$ en deux autres points, ce qui d\'efinit une involution birationnelle sur $S^{[2]}$ qui s'\'etend en un isomorphisme (voir \cite{Beauvillec1nul}). 

Beauville \cite{BeauvilleKaehler} montre que si $S$ est une surface simplement connexe, de groupe de Picard nul et de groupe d'automorphismes trivial, alors pour tout $n$ le groupe $\Aut(S^{[n]})$ est aussi trivial. Ce r\'esultat se g\'en\'eralise :

\begin{proposition}
\label{prop:K3}
Si $S$ est simplement connexe, de groupe de Picard nul et telle que $h^0(S,T_S)=0$, alors $\Aut(S)\cong \Aut(S^{[n]})$ pour tout $n\geq 1$.
\end{proposition}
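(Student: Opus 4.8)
The case $n=1$ is trivial since $S^{[1]}\cong S$, so assume $n\geq 2$. By the hypothesis $h^0(S,T_S)=0$ and Corollary~\ref{cor:dimension} we have $\dim\Aut(S^{[n]})=\dim\Aut(S)=0$, so both groups are discrete and at most countable, and the natural morphism $\Aut(S)\hookrightarrow\Aut(S^{[n]})$ constructed above is injective. The whole point is therefore to prove that it is \emph{surjective}: every automorphism $g$ of $S^{[n]}$ must be shown to be natural. The plan is to descend $g$ to the open stratum of reduced subschemes, lift it to an automorphism of $S^n$, and recognize there that it comes from a single automorphism of $S$.

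First I would show that $g$ preserves the exceptional divisor $E$. Since $\Pic(S)=0$, the description of $\Pic(S^{[n]})$ due to Fogarty gives $\Pic(S^{[n]})\cong\IZ\delta$ with $2\delta=[E]$, so $g^*$ acts on $\Pic(S^{[n]})\cong\IZ$ by $\pm 1$; as $g^{-1}(E)$ is effective whereas $-[E]$ is not, necessarily $g^*\delta=\delta$, whence $g^{-1}(E)$ is an irreducible effective divisor in the class $[E]$. Using that $E$ is rigid (it is the unique effective divisor in its class), this forces $g(E)=E$. Consequently $g$ restricts to an automorphism $\bar g$ of the open set $U:=S^{[n]}\setminus E\cong(S^n\setminus\Delta)/\kS_n$.

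Next I would pass to $S^n$. As $S$ is simply connected and $\Delta$ has complex codimension $2$, the space $S^n\setminus\Delta$ is simply connected, hence is the universal cover of $U$ with deck group $\kS_n$ acting freely; so $\bar g$ lifts to a biholomorphism $\tilde g$ of $S^n\setminus\Delta$ normalizing $\kS_n$. Because $\Pic(S)=0$, the surface $S$ contains no compact curve, so $S^n$ contains no rational curve; therefore $\tilde g$ and $\tilde g^{-1}$, being holomorphic off the codimension-$2$ set $\Delta$, extend through the closure of their graphs (whose exceptional fibres would otherwise be covered by rational curves in $S^n$) to an automorphism $G\in\Aut(S^n)$ still normalizing $\kS_n$.

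The heart of the matter, and the step I expect to be the main obstacle, is to show that $G$ lies in the wreath product $\Aut(S)\wr\kS_n$, i.e. that $G$ cannot mix the factors by a graph- or shear-type construction; this is exactly where the hypotheses $\omega_S\cong\cO_S$, $\pi_1(S)=1$ and $h^0(S,T_S)=0$ all enter. I would first record the rigidity sublemma: every nonconstant holomorphic map $\varphi\colon S\to S$ is an automorphism, since its image cannot be a curve, so $\varphi$ is dominant and generically finite, the ramification formula $K_S=\varphi^*K_S+R$ together with $\omega_S\cong\cO_S$ and the absence of curves forces $R=0$, hence $\varphi$ is étale, and simple connectivity makes it an isomorphism. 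Now a leaf $L=G(F)$, image of a coordinate slice $F\cong S$, carries the trivial normal bundle $N_{L/S^n}\cong\cO_S^{\oplus 2(n-1)}$ transported from $F$ by $dG$, and each projection $p_j|_L\colon L\cong S\to S$ is, by the sublemma, either constant or an automorphism; were two of them automorphisms, the normal sequence $0\to T_L\to\bigoplus_j (p_j|_L)^*T_S\to N_{L/S^n}\to 0$ would force $N_{L/S^n}$ to contain the nontrivial bundle $T_S$ (nontrivial because $h^0(S,T_S)=0$) as a direct summand, contradicting its triviality. Hence exactly one projection is nonconstant, $L$ is again a coordinate slice, and letting the leaf vary over the connected family shows that $G$ permutes the coordinate foliations by some $\tau\in\kS_n$, that is $G\in\Aut(S)\wr\kS_n$. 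Writing $G$ in this form, the condition that it normalize the diagonal $\kS_n$ forces all its factor-automorphisms to coincide with a single $f\in\Aut(S)$; descending to $U$ then gives $\bar g=\overline{f^{[n]}}$, and by density of $U$ in $S^{[n]}$ we conclude $g=f^{[n]}$, so $g$ is natural.
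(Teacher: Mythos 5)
Your opening and closing steps track the paper's own proof: $g$ preserves $E$ because $\Pic(S^{[n]})$ is generated by a class $\delta$ with $2\delta=[E]$ and $E$ is rigid; $g$ then lifts to a biholomorphism $\tilde g$ of the universal cover $S^n\setminus\Delta$ of $S^{[n]}\setminus E$; and at the end, normalization of the deck group $\kS_n$ forces all factor automorphisms to coincide. Your middle, however, is genuinely different: where the paper analyses $\tilde g$ slice by slice, extending each punctured-slice component to an automorphism of $S$ by Beauville's Lemme 1 and exploiting that $h^0(S,T_S)=0$ makes $\Aut(S)$ totally discontinuous (so these automorphisms cannot vary with the parameters), you first extend $\tilde g$ to a global automorphism $G$ of $S^n$ and then run a normal-bundle computation in which $h^0(S,T_S)=0$ enters through the fact that $T_S$, having no sections, cannot be a direct summand of a trivial bundle. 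That computation is correct and attractive: if $k\geq 2$ projections of a leaf were automorphisms, one gets $N_{L/S^n}\cong T_S^{\oplus(k-1)}\oplus\cO^{\oplus 2(n-k)}$, and comparing $h^0$ with $\cO^{\oplus 2(n-1)}$ forces $k=1$.

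The genuine gap is the extension step on which all of this rests. You assert that $\tilde g$, holomorphic off the codimension-$2$ set $\Delta$, extends ``through the closure of its graph'' because $S^n$ contains no rational curves. But the analyticity of that closure is exactly the assertion that $\tilde g$ admits a meromorphic extension to $S^n$, and this is a theorem (Siu's extension theorem across subsets of codimension $\geq 2$) whose hypothesis is that the target be compact \emph{K\"ahler}; the absence of rational curves only intervenes afterwards, to show that a meromorphic extension is holomorphic and then biholomorphic. Without K\"ahlerness the statement fails: the quotient map from $\IC^2\setminus\{0\}$ onto a Hopf surface extends to no meromorphic map through the origin, its cluster set there being the whole surface. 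So as written the step is unjustified: you would need to prove that $S$, hence $S^n$, is K\"ahler --- which from the hypotheses ``simplement connexe, $\Pic(S)=0$, $h^0(S,T_S)=0$'' amounts to the deep fact that surfaces with even $b_1$ are K\"ahler, i.e.\ to recognizing $S$ as a non-algebraic K3 --- and then cite Siu. The paper deliberately avoids any extension across $\Delta$: the only extension it ever uses is that of a map from $S$ minus finitely many points into $S$ (Beauville's Lemme 1, valid for any compact surface without curves), combined with the total-discontinuity argument in place of your normal-bundle one. A secondary point: citing Fogarty for $\Pic(S^{[n]})$ is inapt here, since his computation concerns algebraic surfaces and $S$ is necessarily non-algebraic; the relevant argument is Beauville's.
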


La condition de non existence de sections globales non nulles du fibr\'e tangent signifie que le groupe $\Aut(S)$ est totalement discontinu. S'il est en particulier trivial, nous retrouvons le th\'eor\`eme de Beauville cit\'e ci-dessus.

\begin{proof} La d\'emonstration reprend au d\'ebut celle de Beauville (\cite[\S5]{BeauvilleKaehler}); nous en rappelons les arguments pour faciliter la lecture. 

Soit $n\geq 2$ et $f$ un automorphisme de $S^{[n]}$. Il induit un automorphisme de son groupe de Picard. Puisque $\Pic(S)=\{0\}$, $\Pic(S^{[n]})$ est engendr\'e par un fibr\'e $L$ tel que $L^{\otimes 2}\cong \cO(E)$, donc $f$ pr\'eserve $E$  (car $E$ est rigide). $f$ induit donc un automorphisme de $S^{[n]}\setminus E$. La compos\'ee du quotient par le groupe sym\'etrique $S^n\setminus\Delta\to~S^{(n)}\setminus~D$ avec l'isomorphisme $S^{(n)}\setminus D\cong S^{[n]}\setminus E$ est un rev\^etement fini et non ramif\'e $\pi\colon~S^n\setminus~\Delta\to~S^{[n]}\setminus E$. Puisque $S$ est simplement connexe est que $\Delta$ est de codimension $2$ dans $S^n$, $S^n\setminus \Delta$ est aussi simplement connexe donc $\pi$ est le rev\^etement universel de $S^{[n]}\setminus E$. L'automorphisme $f$ s'y remonte donc en un automorphisme $\bar{f}$ de $S^n\setminus\Delta$.

Soient $s_1,\ldots,s_n$ des points de $S$ deux \`a deux distincts. Consid\'erons le morphisme compos\'e $S\setminus\{s_2,\ldots,s_n\}\to S^n\setminus\Delta\to S^n\setminus\Delta$ d\'efini par
$x_1\mapsto \bar{f}(x_1,s_2,\ldots,s_n)$. L'une au moins de ses compos\'ees \`a droite avec les projections $p_1,\ldots,p_n$ sur les diff\'erents facteurs est non constante puisque $\bar{f}$ est injective, et quitte \`a permuter les variables on peut supposer que $p_1$ a cette propri\'et\'e. L'application rationnelle $x_1\mapsto p_1\circ\bar{f}(x_1,s_2,\ldots,s_n)$ \'etant non constante, c'est en fait un isomorphisme de $S$ (\cite[Lemme 1]{BeauvilleKaehler}) qui d\'epend contin\^ument de $s_2,\ldots,s_n$ dans un voisinage de ces points o\`u cette compos\'ee reste un isomorphisme. Mais le groupe d'automorphismes de $S$ \'etant totalement discontinu, cet isomorphisme est ind\'ependant localement de $s_2,\ldots,s_n$ : notons-le $\phi_1(x_1)$. Continuons avec la deuxi\`eme coordonn\'ee en consid\'erant maintenant la compos\'ee $x_2\mapsto \bar{f}(s_1,x_2,s_3,\ldots,s_n)$. La premi\`ere projection sur $S$ est constante de valeur $\phi_1(s_1)$, donc l'une des projections $p_2,\ldots,p_n$ est non constante, et nous pouvons supposer qu'il s'agit de $p_2$. Comme pr\'ec\'edemment, $x_2\mapsto p_2\circ\bar{f}(s_1,x_2,s_3,\ldots,s_n)$ est en fait un isomorphisme de $S$ qui ne d\'epend pas de $s_1,s_3,\ldots,s_n$ dans un voisinage de ces points, et que nous notons $\phi_2(x_2)$. On traite ainsi de proche en proche toutes les coordonn\'ees. Finalement, dans un voisinage de $(s_1,\ldots,s_n)\in S^n\setminus\Delta$ l'automorphisme $\bar{f}$ co\"incide avec l'application $(x_1,\ldots,x_n)\mapsto(\phi_1(x_1),\ldots,\phi_n(x_n))$. Puisque $S^n\setminus\Delta$ est connexe, ces deux fonctions sont \'egales en tout point.

Ceci fait, l'automorphisme $\bar{f}(x_1,\ldots,x_n)=(\phi_1(x_1)\ldots,\phi_n(x_n))$ de $S^n\setminus\Delta$ doit redescendre \`a $S^{(n)}\setminus D$ pour donner $f$. Le rev\^etement \'etant galoisien de groupe $\kS_n$, c'est le cas si et seulement s'il existe
un isomorphisme $\psi$ du groupe $\kS_n$ tel que $\bar{f}(\sigma\cdot(x_1,\ldots,x_n))=\psi(\sigma)\cdot(\phi_1(x_1)\ldots,\phi_n(x_n))$ pour tout $\sigma\in\kS_n$. Si $\psi$ est diff\'erent de l'identit\'e, prenons $\sigma$ tel que $\psi(\sigma)\neq\sigma$ et trois entiers $i,j,k$ avec $\sigma(i)=j$ et $k=\psi(\sigma)(i)\neq j$. La condition d'\'equivariance dit que $\phi_i(x_j)=\phi_k(x_k)$ pour tout $x_j\neq x_k$ et contredit le fait que $\bar{f}$ est \`a valeurs dans $S^n\setminus\Delta$. Donc $\psi$ est l'identit\'e et la condition d'\'equivariance dit que pour tous indices $i,j$ on a $\phi_i(x_i)=\phi_j(x_j)$ pour $x_i\neq x_j$. Cette \'egalit\'e reste vraie pour $x_i=x_j$ donc $\phi_i=\phi_j$ pour tous $i,j$. En notant $\phi$ cet isomorphisme de $S$, on a donc $\bar{f}(x_1,\ldots,x_n)=(\phi(x_1),\ldots,\phi(x_n))$ et $\bar{f}$ redescend. L'isomorphisme $f$ de $S^{[n]}$ de d\'epart provient donc de l'isomorphisme $\phi$ de $S$. Le morphisme injectif de groupes de Lie $\Aut(S)\to\Aut(S^{[n]})$ est donc aussi surjectif, donc c'est un isomorphisme puisque $\Aut(S)$ est localement compact et d\'enombrable \`a l'infini \cite[1.10 Corollaire 1]{Godement}.
\end{proof}

Les hypoth\`eses de cette proposition ne sont remplies que par des surfaces K3 non alg\'ebriques, auquel cas le groupe de Picard est g\'en\'eriquement nul, donc on a imm\'ediatement :

\begin{corollaire}
Les automorphismes de l'espace de Douady de points sur une surface K3 g\'en\'erique sont tous naturels.
\end{corollaire}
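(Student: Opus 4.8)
Le plan est de constater que le corollaire d\'ecoule directement de la proposition \ref{prop:K3} : il suffit de v\'erifier qu'une surface K3 g\'en\'erique en satisfait les trois hypoth\`eses, \`a savoir la simple connexit\'e, la nullit\'e du groupe de Picard et l'annulation $h^0(S,T_S)=0$.

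Deux de ces conditions sont de nature g\'en\'erale et ne requi\`erent aucune g\'en\'ericit\'e. D'une part, toute surface K3 est simplement connexe, ce qui est un fait classique. D'autre part, la trivialit\'e du fibr\'e canonique $\omega_S\cong\cO_S$, qui caract\'erise les surfaces K3, fournit sur cette surface l'isomorphisme $T_S\cong\Omega_S\otimes\omega_S^\vee\cong\Omega_S$ ; on en d\'eduit $h^0(S,T_S)=h^{1,0}(S)$, quantit\'e nulle d'apr\`es les nombres de Hodge d'une surface K3 (c'est d'ailleurs l'observation d\'ej\`a faite dans la remarque pr\'ec\'edente).

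Reste la condition $\Pic(S)=0$, qui est le contenu pr\'ecis du mot ``g\'en\'erique''. Pour l'\'etablir, je d\'ecrirais $\Pic(S)$ comme le groupe des classes enti\`eres de type $(1,1)$, c'est-\`a-dire des classes $\lambda\in H^2(S,\IZ)$ orthogonales \`a la forme holomorphe via l'application des p\'eriodes. Chaque classe non nulle impose ainsi une condition ferm\'ee sur la structure complexe, de sorte que le lieu de l'espace des p\'eriodes (de dimension $20$) o\`u $\Pic(S)\neq 0$ est une r\'eunion d\'enombrable de sous-ensembles analytiques propres, de compl\'ementaire dense. Pour une structure complexe en dehors de ce lieu, on a $\Pic(S)=0$, et une telle surface est forc\'ement non alg\'ebrique puisqu'une surface alg\'ebrique poss\`ede une classe ample non nulle dans son groupe de Picard.

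Les trois hypoth\`eses \'etant acquises pour $S$ g\'en\'erique, la proposition \ref{prop:K3} donne $\Aut(S)\cong\Aut(S^{[n]})$ pour tout $n\geq 1$, l'isomorphisme \'etant induit par le morphisme naturel $\Aut(S)\to\Aut(S^{[n]})$ : sa surjectivit\'e signifie exactement que tout automorphisme de $S^{[n]}$ est naturel. Le seul point demandant un argument substantiel est la g\'en\'ericit\'e de $\Pic(S)=0$, qui repose sur la surjectivit\'e de l'application des p\'eriodes et la d\'enombrabilit\'e du lieu de Picard ; tout le reste est formel.
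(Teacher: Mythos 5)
Votre démonstration est correcte et suit exactement le chemin du papier : le corollaire est obtenu comme conséquence immédiate de la proposition~\ref{prop:K3}, en vérifiant que les trois hypothèses (simple connexité, $h^0(S,T_S)=0$ via $T_S\cong\Omega_S$, et $\Pic(S)=0$ pour une surface K3 générique par l'argument de dénombrabilité dans l'espace des périodes) sont satisfaites. Le papier se contente d'énoncer ces faits en une phrase avant le corollaire ; vous les détaillez davantage, mais l'argument est le même.
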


\begin{remark}
En utilisant le th\'eor\`eme de Torelli global, McMullen \cite{McMullen} construit des surfaces K3 non alg\'ebriques de groupe de Picard nul ayant un groupe d'automorphismes d'ordre infini.
\end{remark}

\section{Etude cohomologique}

\subsection{Pr\'eliminaires sur les traces}

Soit $V$ un espace vectoriel de dimension finie sur $\IC$ et $\kf\in\End(V)$. Notons $\trace_V(\kf)\in\IC$ sa trace et $\chi_V(\kf):=\det(\kf-X \id_V)\in\IC[X]$ son polyn\^ome caract\'eristique. Si $W$ et $\kg\in\End(W)$ sont donn\'es similairement, on a les formules :
\begin{equation}
\label{formule:trace1}
\begin{split}
\trace_{V\oplus W}(\kf\oplus \kg)&=\trace_V(\kf)+\trace_W(\kg)\in\IC,\\
\trace_{V\otimes W}(\kf\otimes \kg)&=\trace_V(\kf)\cdot\trace_W(\kg)\in\IC.
\end{split}
\end{equation}
Supposons maintenant que $V=\bigoplus_{n=0}^{+\infty} V_n$ est un espace vectoriel gradu\'e, chaque $V_n$ \'etant de dimension finie, et que $\kf=(\kf_n)_{n\geq 0}$ avec $\kf_n\in\End(V_n)$. On d\'efinit sa trace \`a valeurs dans $\IC[[q]]$ par :
$$
\trace_V(\kf):=\sum_{n=0}^{+\infty}\trace_{V_n}(\kf_n)q^n.
$$
Si $W=\bigoplus_{n=0}^{+\infty} W_n$ et $\kg=(\kg_n)_{n\geq 0}$ sont donn\'es similairement, les formules (\ref{formule:trace1}) s'\'etendent, \`a valeurs cette fois dans $\IC[[q]]$.

Soit $E$ un espace vectoriel de dimension finie sur $\IC$. Notons  son alg\`ebre tensorielle $T(E):=~\bigoplus_{n=0}^{+\infty}T^n(E)$, son alg\`ebre sym\'etrique $S(E):=\bigoplus_{n=0}^{+\infty}S^n(E)$  et son alg\`ebre altern\'ee $\Lambda(E):=~\bigoplus_{n=0}^{+\infty}\Lambda^n(E)$. Pour  $\kf\in\End(E)$, on calcule ais\'ement les traces des endomorphismes induits sur ces trois alg\`ebres :
\begin{equation}
\label{formule:trace2}
\begin{split}
\trace_{T(E)}(T(\kf))&=\frac{1}{1-\trace_E(\kf)q}\in\IC[[q]],\\
\trace_{S(E)}(S(\kf))&=\frac{1}{(-q)^{\dim E}\chi_E(\kf)(1/q)}\in\IC[[q]],\\
\trace_{\Lambda(E)}(\Lambda(\kf))&=q^{\dim E}\chi_E(\kf)(-1/q)\in\IC[q].
\end{split}
\end{equation}
En effet, il suffit de consid\'erer le cas o\`u $\kf$ est diagonalisable et conclure par densit\'e, et sous cette hypoth\`ese, si l'on note $\chi_E(\kf)=(-1)^{\dim E}\prod_{i=1}^{\dim E}(X-\lambda_i)$, alors :
\begin{align*}
(-q)^{\dim E}\chi_E(\kf)(1/q)=\prod_{i=1}^{\dim E}(1-\lambda_iq)\\
q^{\dim E}\chi_E(\kf)(-1/q)=\prod_{i=1}^{\dim E}(1+\lambda_iq).
\end{align*}

Soit $A=\bigoplus_{i=0}^d A^i$ un espace vectoriel gradu\'e de dimension finie sur $\IC$ et $\kf=~(\kf_i)_{i=0,\ldots,d}$ avec $\kf_i\in\End(A^i)$. Posons :
\begin{align*}
\trace_A(\kf)&=\sum_{i=0}^d\trace_{A^i}(\kf_i)t^i\in\IC[t],\\
\chi_A(\kf)&=\prod_{i=0}^dt^{i\dim A^i}\chi_{A^i}(\kf)(X/t^i)\in\IC[t][X].
\end{align*}
Les alg\`ebres $T(A)$, $S(A)$ et $\Lambda(A)$ sont munies d'une double graduation en poids $n$ et degr\'e $i$, l'alg\`ebre $A$ \'etant prise de poids $1$. Les formules (\ref{formule:trace2}) s'\'etendent alors, \`a valeurs dans $\IC[t][[q]]$ et $\IC[t][q]$ respectivement. En effet, lorsque $\kf$ est diagonalisable, le polyn\^ome caract\'eristique ainsi d\'efini tient compte du degr\'e des vecteurs propres en ins\'erant $t^i$ devant les valeurs propres $\lambda_{i,j}$ de $\kf_i$ :
$$
\chi_A(\kf)=\prod_{i=0}^d(-1)^{\dim A^i}\prod_{j=1}^{\dim A^i}(X-\lambda_{i,j}t^i).
$$

\subsection{Trace d'un op\'erateur naturel sur un espace de Fock}

Soit $A$ une super-alg\`ebre de Frob\'enius gradu\'ee de dimension finie sur $\IC$, not\'ee $A=\bigoplus_{i=0}^{4n}A^i$. Elle est munie :
\begin{itemize}
\item d'une super-structure (\ie graduation par $\IZ/2\IZ$) d\'efinie par la d\'ecomposition $A^{\text{pair}}:=\bigoplus_{i=0}^{2n}A^{2i}$ et $A^{\rm{impair}}:=\bigoplus_{i=0}^{2n-1}A^{2i+1}$;
\item d'une multiplication gradu\'ee commutative et associative $A\otimes A\to A$ : si $a\in A^i$ et $b\in A^j$, alors $a\cdot b\in A^{i+j}$ et $a\cdot b=(-1)^{i\cdot j} b\cdot a$;
\item d'une forme lin\'eaire $T\colon A\to\IC$ de degr\'e $-4n$ telle que la forme bilin\'eaire super-sym\'etrique $\langle a,b\rangle:=T(a\cdot b)$ est non d\'eg\'en\'er\'ee : la super-sym\'etrie s'\'ecrit $\langle a,b\rangle=~(-1)^{|a|\cdot |b|}\langle b,a\rangle$, o\`u $|\cdot|$ d\'esigne le degr\'e d'un \'el\'ement homog\`ene.
\end{itemize}

Consid\'erons l'alg\`ebre de Heisenberg $\kh_A:=A[t,t^{-1}]\oplus\IC \kappa$ o\`u le crochet de Lie est d\'efini par :
\begin{align*}
[\kappa,-]&=0 \text{ ($\kappa$ est central)},\\
[a\otimes\phi,b\otimes\psi]&=-\res_{t=0}(\phi\;d\psi)\cdot \langle a,b\rangle\cdot \kappa,
\end{align*}
pour $a,b\in A$ et $\phi,\psi\in\IC[t,t^{-1}]$. Ce crochet est super-antisym\'etrique au sens o\`u l'on a : $[a\otimes\phi,b\otimes\psi]=-(-1)^{|a|\cdot |b|}[b\otimes\psi,a\otimes\phi]$. Pour $a\in A$ et $n\in\IZ$, notons $a_n:=a\otimes t^n$. Le crochet de Lie est enti\`erement caract\'eris\'e par les relations :
\begin{align*}
[\kappa,a_n]&=0,\\
[a_n,b_m]&=n\cdot\delta_{n,-m}\cdot\langle a,b\rangle\cdot \kappa
\end{align*}
o\`u $\delta$ d\'esigne le symbole de Kronecker.

Soit $I\subset U(\kh_A)$ l'id\'eal \`a gauche dans l'alg\`ebre enveloppante de $\kh_A$ engendr\'e par les \'el\'ements de la forme $a_n$ pour $n\leq 0$ et l'\'el\'ement $\kappa-1$. L'\emph{espace de Fock} est par d\'efinition le quotient (\`a droite) $\IF(A):=U(\kh_A)/I$. C'est une repr\'esentation de $\kh_A$, engendr\'ee par la classe not\'ee $\vac$ de $1\in U(\kh_A)$. 
Notons, pour cette repr\'esentation $\kh_A\to\End(\IF(A))$, $\kq_n(a)$ l'endomorphisme correspondant \`a l'\'element $a_n$. On a donc :
\begin{align*}
\kq_n(a)\vac&=0\text{ si } n\leq 0,\\
\kappa\vac&=\vac,\\
[\kq_n(a),\kq_m(b)]&=\kq_n(a)\circ\kq_m(b)-(-1)^{|a|\cdot|b|}\kq_m(b)\circ\kq_n(a)\\
&=n\delta_{n,-m}\cdot\langle a,b\rangle\cdot \id_{\IF(A)}.
\end{align*}
On constate ais\'ement que cette repr\'esentation est irr\'eductible (voir \cite{LehnMontreal}).

On munit $\IF(A)$ d'une double graduation par poids et degr\'e en d\'eclarant qu'un endomorphisme $\kq_n(a)$ est de poids $n$ et de degr\'e $2(n-1)+|a|$, et que $\vac$ est de poids et degr\'e nuls. La d\'ecomposition en poids est alors not\'ee $\IF(A)=\bigoplus_{n=0}^{+\infty}A^{[n]}$. 

\begin{remark}
Lehn\&Sorger \cite{LS2} ont construit sur $A^{[n]}$ une structure de super-alg\`ebre de Frob\'enius gradu\'ee. Nous n'utiliserons pas cette structure suppl\'ementaire dans la suite.
\end{remark}

Pour $n,m\geq 0$, on a $\kq_n(a)\circ\kq_m(b)=(-1)^{|a|\cdot|b|}\kq_m(b)\circ\kq_n(a)$ donc l'alg\`ebre $\IF(A)$ est sym\'etrique en ce qui concerne $A^{\text{pair}}$ et antisym\'etrique pour $A^{\rm{impair}}$. En notant $Aq^m$ l'alg\`ebre $A$ consid\'er\'ee de poids $m$ (au lieu de $1$) on a donc :
$$
\IF(A)\cong S^{\star}\left(\bigoplus_{m\geq 1}Aq^m\right)\cong \bigotimes_{m\geq 1}S(A^{\text{pair}}q^m)\otimes\bigotimes_{m\geq 1}\Lambda(A^{\rm{impair}}q^m)
$$
o\`u $S^\star$ d\'esigne le produit super-sym\'etrique (\ie le produit sym\'etrique sur un $\IZ/2\IZ$-espace vectoriel).

Soit $\kf=(\kf_i)_{i=0,...4n}$ avec $\kf_i\in\End(A^i)$ un endomorphisme de $A$. Il induit un endomorphisme de $\IF(A)$ not\'e $\IF(\kf)$, caract\'eris\'e par la formule :
$$
\IF(\kf)\left(\kq_{\lambda_1}(a_1)\circ\cdots\circ\kq_{\lambda_k}(a_k)\vac\right)=\kq_{\lambda_1}(\kf(a_1))\circ\cdots\circ\kq_{\lambda_k}(\kf(a_k))\vac
$$
o\`u $\kf(a_i)$ signifie bien s\^ur $\kf_{|a_i|}(a_i)$. Les formules de trace pr\'ec\'edentes permettent le calcul de la trace de $\IF(\kf)$ :
\begin{proposition}
\label{prop:traceFock}
$$
\trace_{\IF(A)}(\IF(\kf))=\prod_{m\geq 1}\frac{q^{m\cdot\dim A^{\rm{impair}}}\chi_{A^{\rm{impair}}}(\kf)(-1/q^m)}{(-q^{m})^{\dim A^{\rm{pair}}}\chi_{A^{\rm{pair}}}(\kf)(1/q^m)}\in\IC[t][[q]].
$$
\end{proposition}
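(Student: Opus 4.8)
The plan is to reduce the computation to the already-established trace formulas (\ref{formule:trace1}) and (\ref{formule:trace2}) by means of the super-symmetric decomposition of the Fock space recalled just above the statement. Recall that
$$
\IF(A)\cong \bigotimes_{m\geq 1}S(A^{\text{pair}}q^m)\otimes\bigotimes_{m\geq 1}\Lambda(A^{\rm{impair}}q^m),
$$
and that this isomorphism is compatible with the bigrading: a generator $\kq_m(a)$ with $a$ homog\`ene sits in the $m$-i\`eme facteur tensoriel, dans la partie sym\'etrique si $a$ est pair et dans la partie altern\'ee si $a$ est impair. Since $\kf$ preserves the $\IZ/2\IZ$-graduation, the induced operator $\IF(\kf)$ respecte cette d\'ecomposition and acts factorwise as the symmetric power $S(\kf)$ on each $S(A^{\text{pair}}q^m)$ and as the exterior power $\Lambda(\kf)$ on each $\Lambda(A^{\rm{impair}}q^m)$.

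First I would invoke the multiplicativity of the bigraded trace on tensor products, \ie the second identity of (\ref{formule:trace1}) extended to $\IC[t][[q]]$, to write
$$
\trace_{\IF(A)}(\IF(\kf))=\prod_{m\geq 1}\trace_{S(A^{\text{pair}}q^m)}(S(\kf))\cdot\prod_{m\geq 1}\trace_{\Lambda(A^{\rm{impair}}q^m)}(\Lambda(\kf)).
$$
Then I would apply the extended formulas (\ref{formule:trace2}) to each factor. The only modification is that the algebra sitting in the $m$-i\`eme facteur carries poids $m$ rather than $1$, which amounts to the substitution $q\mapsto q^m$ in the weight variable, the degree variable $t$ entering only through the graded characteristic polynomials $\chi_{A^{\text{pair}}}$ and $\chi_{A^{\rm{impair}}}$. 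This yields
$$
\trace_{S(A^{\text{pair}}q^m)}(S(\kf))=\frac{1}{(-q^m)^{\dim A^{\text{pair}}}\chi_{A^{\text{pair}}}(\kf)(1/q^m)},\qquad \trace_{\Lambda(A^{\rm{impair}}q^m)}(\Lambda(\kf))=q^{m\dim A^{\rm{impair}}}\chi_{A^{\rm{impair}}}(\kf)(-1/q^m),
$$
and substituting these into the product gives the announced formula directly.

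The one point that requires genuine care --- and which I expect to be the main obstacle --- is the meaning and convergence of the infinite product in $\IC[t][[q]]$. I would justify it by observing that every nonzero contribution coming from the poids-$m$ creation operators carries $q$-ordre at least $m$, so that each factor $\trace_{S(A^{\text{pair}}q^m)}(S(\kf))\cdot\trace_{\Lambda(A^{\rm{impair}}q^m)}(\Lambda(\kf))$ equals $1$ plus des termes de $q$-ordre $\geq m$. Consequently, for each fixed $N$ only the factors with $m\leq N$ affect the coefficient of $q^N$, so the product is a well-defined element of $\IC[t][[q]]$ and the factorwise application of (\ref{formule:trace1}) is legitimate. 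A secondary technical check is that the identification of $\IF(\kf)$ with the factorwise tensor product of the $S(\kf)$ and $\Lambda(\kf)$ is compatible with the signes de Koszul of the super-symmetric product; this follows at once from the defining formula for $\IF(\kf)$ on the mon\^omes $\kq_{\lambda_1}(a_1)\circ\cdots\circ\kq_{\lambda_k}(a_k)\vac$, since $\kf$ preserves parity. Finally, (\ref{formule:trace2}) is already available for an arbitrary $\kf$, so no separate reduction to the diagonalizable case is needed here.
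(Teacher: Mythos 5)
Your proof is correct and takes essentially the same approach as the paper: the paper in fact gives no separate argument, presenting the proposition as an immediate consequence of the trace formulas (\ref{formule:trace1}) and (\ref{formule:trace2}) applied to the decomposition $\IF(A)\cong\bigotimes_{m\geq 1}S(A^{\text{pair}}q^m)\otimes\bigotimes_{m\geq 1}\Lambda(A^{\rm{impair}}q^m)$, which is exactly what you spell out. Your additional checks (convergence of the infinite product in $\IC[t][[q]]$, compatibility with the signes de Koszul, and the observation that no reduction to the diagonalizable case is needed) merely make explicit what the paper leaves to the reader.
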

Remarquons que si $\kf$ est l'identit\'e, on retrouve la formule d\'ej\`a connue de la s\'erie de Poincar\'e de l'espace de Fock.

\subsection{Rappels sur la cohomologie de $\HilbS$} Les r\'esultats rappel\'es ici sont dus \`a G\"ottsche \cite{Goettsche} et Nakajima \cite{Nakajima} pour les surfaces alg\'ebriques, \'etendus aux surfaces analytiques par de Cataldo\&Migliorini \cite{dCM}.

Soit $H^*(S^{[n]}):=\bigoplus_{i=0}^{4n} H^i(S^{[n]})$ l'alg\`ebre de cohomologie singuli\`ere \`a coefficients complexes de $S^{[n]}$, la structure d'anneau \'etant donn\'ee par le cup-produit. \emph{L'alg\`ebre de cohomologie totale des espaces de Douady de points sur $S$} est $\IH_S:=\bigoplus_{n\geq 0} H^*(S^{[n]})$. L'unit\'e de $H^*(S^{[0]})\cong\IC$ est appel\'ee le \emph{vacuum} et est not\'ee $\vac$.

L'espace $\IH_S$ est muni d'une double graduation : les \'el\'ements de $H^i(S^{[n]})$ sont dits de bidegr\'e $(n,i)$ o\`u $n$ est le \emph{poids conforme} et $i$ le \emph{degr\'e cohomologique}. Un op\'erateur lin\'eaire $\kg\in\End(\IH_S)$ est dit \emph{homog\`ene} de bidegr\'e $(u,v)$ s'il v\'erifie pour tous $n,i$ : $\kg(H^i(S^{[n]}))\subset H^{i+v}(S^{[n+u]})$. On notera $v=:|\kg|$ le degr\'e cohomologique de $\kg$. Le commutateur de deux op\'erateurs lin\'eaires homog\`enes $\kg_1,\kg_2$ est d\'efini par $[\kg_1,\kg_2]=\kg_1\circ\kg_2 -(-1)^{|\kg_1|\cdot|\kg_2|}\kg_2\circ\kg_1$.

Le produit d'intersection sur $H^*(S^{[n]}])$ d\'efini par $\langle\alpha,\beta\rangle_n:=\int_{S^{[n]}}\alpha\beta$ s'\'etend naturellement en une forme bilin\'eaire sym\'etrique non d\'eg\'en\'er\'ee not\'ee $\langle\cdot,\cdot\rangle$ sur $\IH_S$. Tout op\'erateur lin\'eaire homog\`ene $\kg\in\End(\IH_S)$ admet donc un \emph{adjoint} not\'e $\kg^\dagger$ caract\'eris\'e par la relation $\langle \kg(\alpha),\beta\rangle=(-1)^{|\kg|\cdot|\alpha|} \langle\alpha,\kg^\dagger(\beta)\rangle$.

Soit $\kh_{H^*(S)}:=H^*(S)[t,t^{-1}]\oplus \IC c$ munie comme pr\'ec\'edemment du crochet de Lie pour lequel $c$ est central et $[\alpha\,f(t),\beta\,g(t)]=\int_S\alpha\beta\cdot \res_tg\,df\cdot c$
pour $\alpha,\beta\in H^*(S)$ et $f,g\in\IC[t,t^{-1}]$. On construit g\'eom\'etriquement une repr\'esentation irr\'eductible $\kh_{\IH^*(S)}\rightarrow\End(\IH_S)
$ au moyen des op\'erateurs de Nakajima dont nous rappelons la d\'efinition pour un usage ult\'erieur.

Pour tous $n\geq 0$ et $k\geq 1$, soit $\Sigma^{[n,n+k]}\subset S^{[n]}\times S\times S^{[n+k]}$ la sous-vari\'et\'e dont les points sont les triplets $(\xi,x,\xi')$ tels que $\xi\subset\xi'$ et le support de $\cI_\xi/\cI_{\xi'}$ est $\{x\}$, o\`u $\cI_\xi$ d\'esigne le faisceau d'id\'eaux du sous-espace $\xi$ de $S$. Notons les diff\'erentes projections sur les facteurs ainsi :
$$
\xymatrix{& S^{[n]}\times S\times
S^{[n+k]}\ar[dl]_\varphi\ar[d]^\rho\ar[dr]^\psi\\S^{[n]}& S& S^{[n+k]}}
$$
Les \emph{op\'erateurs de Nakajima} $\kq_k:H^*(S)\rightarrow \End(\IH_S)$ pour $k\in \IZ$
sont d\'efinis ainsi : pour tous $k\geq 0$, $\alpha\in H^*(S)$ et $x\in H^*(S^{[n]})$ on pose
$$
\kq_k(\alpha)(x):=\psi_*\left(\varphi^*(x)\cdot \rho^*(\alpha)\cdot\left[\Sigma^{[n,n+k]}\right]\right)
$$
o\`u $\psi_*$ d\'esigne l'image directe de cohomologie singuli\`ere d\'efinie \`a partir de l'image directe d'homologie
en utilisant la dualit\'e de Poincar\'e et $\left[\Sigma^{[n,n+k]}\right]$ d\'esigne la classe fondamentale cohomologique de la sous-vari\'et\'e.
Les op\'erateurs d'indice n\'egatif sont ensuite d\'efinis par adjonction $\kq_{-k}(\alpha):=(-1)^k \kq_k(\alpha)^\dagger$ pour $k>0$.
On convient de poser $\kq_0=0$. Les op\'erateurs $\kq_k$ sont appel\'es \emph{op\'erateurs de cr\'eation}
si $k\geq 1$ et \emph{op\'erateurs d'annihilation} si $k\leq -1$.
Le fait que ces op\'erateurs fournissent une repr\'esentation de $\kh_{H^*(S)}$ r\'esulte du th\'eor\`eme de Nakajima \cite{Nakajima} donnant leur r\`egle de commutation : $[\kq_i(\alpha),\kq_j(\beta)]=i\cdot\delta_{i+j,0}\cdot \int_S\alpha\beta\cdot\id_{\IH_S}$. Le fait que la repr\'esentation $\kh_{H^*(S)}\rightarrow\End(\IH_S)$ soit irr\'eductible r\'esulte de l'\'egalit\'e entre la s\'erie de Poincar\'e de $\IH_S$ et celle de la repr\'esentation irr\'eductible de $\kh_{H^*(S)}$ donn\'ee par l'espace de Fock. Autrement dit, $\IH_S\cong\IF(H^*(S))$ comme repr\'esentations de $\kh_{H^*(S)}$, ce qui se traduit par le fait que l'espace $\IH_S$ admet une base constitu\'ee des vecteurs de la forme :
$$
\kq_{n_1}(u_1)\cdots\kq_{n_k}(u_k)\vac
$$
pour $k\geq 0$ et $n_i\geq 1$, o\`u les $u_i$ parcourent une base de $H^*(S)$ (en faisant attention aux signes car si un tel vecteur contient deux fois le m\^eme $u_j$ de degr\'e cohomologique impair, alors ce vecteur est nul en raison de l'intervention des signes dans la r\`egle de commutation des op\'erateurs de Nakajima).

\subsection{Action des automorphismes naturels sur la cohomologie}

Soit $f\in~\Aut(S)$ et $(f_n)_{n\geq 0}$ l'automorphisme naturel de $\HilbS$ induit (avec $f_n=f^{[n]}$). Cela induit un op\'erateur lin\'eaire sur $\IH_S$ de poids conforme nul dont les composantes sont not\'ees $f_n^*\in\End(H^*(S^{[n]}))$. Notons $f^\star$ l'op\'erateur sur $\IH_S$ tel que $f^\star_{|H^*(S^{[n]})}=f_n^*$, avec $f_1^*=f^*$. La relation entre $f^\star$ et les op\'erateurs de Nakajima est donn\'ee par la formule suivante :
\begin{lemma} Pour tous $k\in\IZ$ et $\alpha\in H^*(S)$ on a :
$$
f^\star\circ\kq_k(\alpha)=\kq_k\left(f^*\alpha\right)\circ f^\star.
$$
\end{lemma}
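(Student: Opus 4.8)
The plan is to establish the identity first for the creation operators ($k\geq 1$), then to deduce the case $k\leq -1$ by adjunction and to dispatch $k=0$ trivially since $\kq_0=0$. Throughout I write $f_n:=f^{[n]}$, so that $f^\star$ restricts to the pullback $f_n^*$ on each $H^*(S^{[n]})$; note that $f^\star$ is homogeneous of bidegree $(0,0)$, which will make all super-signs attached to it trivial.

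For $k\geq 1$, the geometric heart of the matter is that the automorphism $F:=f_n\times f\times f_{n+k}$ of $S^{[n]}\times S\times S^{[n+k]}$ preserves the incidence variety $\Sigma^{[n,n+k]}$. Indeed, a point $(\xi,x,\xi')$ lies in $\Sigma^{[n,n+k]}$ if and only if $\xi\subset\xi'$ and the support of $\cI_\xi/\cI_{\xi'}$ is $\{x\}$; applying the isomorphism $f$ of $S$ sends this triple to $(f(\xi),f(x),f(\xi'))$, which still satisfies $f(\xi)\subset f(\xi')$ and $\Support(\cI_{f(\xi)}/\cI_{f(\xi')})=f(\{x\})=\{f(x)\}$, because $f$ transports ideal sheaves and their supports. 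As $F$ is an isomorphism whose inverse is built from $f^{-1}$, one gets $F(\Sigma^{[n,n+k]})=\Sigma^{[n,n+k]}$, hence $F^*[\Sigma^{[n,n+k]}]=[\Sigma^{[n,n+k]}]$.

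I would then exploit the compatibilities $\varphi\circ F=f_n\circ\varphi$, $\rho\circ F=f\circ\rho$ and $\psi\circ F=f_{n+k}\circ\psi$. The first two give $F^*\varphi^*=\varphi^*f_n^*$ and $F^*\rho^*=\rho^*f^*$, while the third, together with the fact that $F$ and $f_{n+k}$ are isomorphisms, yields the key commutation $f_{n+k}^*\circ\psi_*=\psi_*\circ F^*$ for the Gysin pushforward (this follows from functoriality of the homological pushforward and Poincar\'e duality, using that $g^*=(g_*)^{-1}$ for an isomorphism $g$). Combining these with the fact that $F^*$ is a ring homomorphism fixing $[\Sigma^{[n,n+k]}]$, a direct computation on $x\in H^*(S^{[n]})$ gives
\[
f_{n+k}^*\,\kq_k(\alpha)(x)=\psi_*\bigl(\varphi^*(f_n^*x)\cdot\rho^*(f^*\alpha)\cdot[\Sigma^{[n,n+k]}]\bigr)=\kq_k(f^*\alpha)(f_n^*x),
\]
which is exactly $f^\star\circ\kq_k(\alpha)=\kq_k(f^*\alpha)\circ f^\star$.

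Finally, for $k\leq -1$ I would pass to adjoints. Since each $f_n$ is a biholomorphism it preserves the fundamental class, so $f^\star$ is an isometry for $\langle\cdot,\cdot\rangle$ and $(f^\star)^\dagger=(f^\star)^{-1}$. Taking the adjoint of the creation relation $f^\star\kq_k(\alpha)=\kq_k(f^*\alpha)f^\star$ (the super-signs disappear because $|f^\star|=0$) and rearranging gives $f^\star\kq_k(\alpha)^\dagger=\kq_k(f^*\alpha)^\dagger f^\star$; multiplying by $(-1)^k$ and using $\kq_{-k}(\alpha)=(-1)^k\kq_k(\alpha)^\dagger$ yields the claim for negative indices, while $k=0$ is vacuous. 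I expect the only genuinely delicate point to be the commutation $f_{n+k}^*\circ\psi_*=\psi_*\circ F^*$ of pushforward with pullback, where one must track Poincar\'e duality carefully, whereas the invariance of $\Sigma^{[n,n+k]}$ and the sign bookkeeping are routine.
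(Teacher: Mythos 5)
Your proof is correct and follows essentially the same route as the paper: for $k\geq 1$ you use the invariance of the incidence variety $\Sigma^{[n,n+k]}$ under $f_n\times f\times f_{n+k}$ (a consequence of naturality) together with the compatibilities of $\varphi$, $\rho$, $\psi$ and the commutation $f_{n+k}^*\circ\psi_*=\psi_*\circ(f_n\times f\times f_{n+k})^*$, and for $k\leq -1$ you pass to adjoints using $(f^\star)^\dagger=(f^\star)^{-1}=(f^{-1})^\star$, exactly as in the paper (your rearrangement by multiplying with $f^\star$ on both sides is only a cosmetic variant of the paper's substitution $f\mapsto f^{-1}$).
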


\begin{proof} Commen\c{c}ons par le cas $k\geq 1$. Soit $n\geq 0$ et $x\in H^*(S^{[n]})$. On calcule :
$$
f_{n+k}^*\kq_k(\alpha)x=f_{n+k}^*\psi_*\left(\varphi^*x\cdot\rho^*\alpha\cdot\left[\Sigma^{[n,n+k]}\right]\right).
$$
Le diagramme suivant est commutatif :
$$
\xymatrix{S^{[n]}\times S\times S^{[n+k]}\ar[rr]^{f_n\times f\times f_{n+k}}\ar[d]^\psi&&S^{[n]}\times S\times S^{[n+k]}\ar[d]^\psi\\S^{[n+k]}\ar[rr]^{f_{n+k}}&&S^{[n+k]}}
$$
et on a la formule: $f_{n+k}^*\circ\psi_*=\psi_*\circ (f_n\times f\times f_{n+k})^*$. Observons aussi que $\varphi\circ~(f_n\times~f\times~f_{n+k})=f_n\circ\varphi$ et  $\rho\circ(f_n\times f\times f_{n+k})=f\circ\rho$ et nous obtenons~:
$$
f_{n+k}^*\kq_k(\alpha)x=\psi_*\left(\varphi^*f_n^*x\cdot\rho^*f^*\alpha\cdot\left[(f_n\times f\times f_{n+k})^{-1}(\Sigma^{[n,n+k]})\right]\right).
$$ 
Puisque $(f_n)_{n\geq 0}$ est naturel, $(f_n\times f\times f_{n+k})^{-1}(\Sigma^{[n,n+k]})=\Sigma^{[n,n+k]}$ donc finalement~:
\begin{align*}
f_{n+k}^*\kq_k(\alpha)x&=\psi_*\left(\varphi^*f_n^*x\cdot\rho^*f^*\alpha\cdot\left[\Sigma^{[n,n+k]}\right]\right)\\
&=\kq_k(f^*\alpha)f_n^*(x).
\end{align*}

Pour traiter le cas $k<0$, proc\'edons par adjonction. Pour $k>0$, nous avons obtenu la formule $f^\star\circ\kq_k(\alpha)=\kq_k(f^*\alpha)\circ f^\star$. Par adjonction nous obtenons alors :
$$
\kq_{-k}(\alpha)\circ (f^\star)^\dagger=(f^\star)^\dagger\circ\kq_{-k}(f^*\alpha).
$$
Puisque $(f^\star)^\dagger=(f^{-1})^\star$, on obtient la formule indiqu\'ee.
\end{proof}

Ce lemme implique que pour toute classe $\kq_{n_1}(u_1)\cdots\kq_{n_k}(u_k)\vac\in H^*(S^{[n]})$ on a :
$$
f_n^*\left(\kq_{n_1}(u_1)\cdots\kq_{n_k}(u_k)\vac\right)=\kq_{n_1}(f^*u_1)\cdots\kq_{n_k}(f^*u_k)\vac.
$$
Par ailleurs, partant de l'automorphisme $f$ de $S$, si nous notons $\kf:=f^*$ l'endomorphisme de $H^*(S)$ induit par $f$, nous pouvons \'etendre $\kf$ en un endomorphisme de $\IH_S$ en utilisant la base de la cohomologie donn\'ee par les classes form\'ees d'op\'erateurs de Nakajima. Nous d\'efinissons ainsi un op\'erateur lin\'eaire $\kf^{[n]}$ pour tout $n$ par :
$$
\kf^{[n]}\left(\kq_{n_1}(u_1)\cdots\kq_{n_k}(u_k)\vac\right):=\kq_{n_1}(\kf u_1)\cdots\kq_{n_k}(\kf u_k)\vac
$$
Avec cette d\'efinition, on obtient la formule attendue $\kf^{[n]}=f_n^*$, ou de fa\c{c}on plus sym\'etrique : $(f^*)^{[n]}=(f^{[n]})^*$, ou encore $f^\star=\IF(f^*)$.

\begin{remark}
Soit $f\in\Aut(S)$. Les nombres de Lefschetz de l'automorphisme naturel induit sur $S^{[\bullet]}$ se calculent donc avec la proposition \ref{prop:traceFock}.
\end{remark}

Pour toute vari\'et\'e analytique connexe, compacte et lisse $X$ et tout automorphisme $f$ de $X$, on note $\rho(f)$ le \emph{rayon spectral} de $f^*$ sur $H^*(X,\IC)$ et $\ent(f):=\log\rho(f)$ son \emph{entropie}.

\begin{corollaire}
Soit $f\in\Aut(S)$. Pour tout $n\geq 1$ on a $\ent(f^{[n]})=n\cdot\ent(f)$.
\end{corollaire}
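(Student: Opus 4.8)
The plan is to exploit the identification $f^\star=\IF(f^*)$ established just above, setting $\kf:=f^*$, and to read off the eigenvalues of $f_n^*$ directly from the Fock-space structure of $\IH_S$. Recall that the vectors $\kq_{m_1}(u_1)\cdots\kq_{m_k}(u_k)\vac$ with $m_j\geq 1$, $\sum_j m_j=n$ and $u_j$ running over a basis of $H^*(S)$ form a basis of $H^*(S^{[n]})$. Choosing this basis so that $\kf u=\mu_u\,u$ on each $u$ (one may assume $\kf$ diagonalizable by density, or argue on the characteristic polynomial), the formula $f_n^*(\kq_{m_1}(u_1)\cdots\kq_{m_k}(u_k)\vac)=\kq_{m_1}(\kf u_1)\cdots\kq_{m_k}(\kf u_k)\vac$ shows that each such vector is an eigenvector of $f_n^*$ with eigenvalue $\prod_{j=1}^k\mu_{u_j}$. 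Hence the eigenvalues of $f_n^*$ are exactly the products $\prod_{j=1}^k\mu_{u_j}$ over all weight-$n$ monomials, and $\rho(f^{[n]})$ is the largest modulus among them. Since $f^*$ fixes the generator of $H^0(S)$, the value $1$ is an eigenvalue of $f^*$, so $\rho(f)\geq 1$.

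First I would prove the upper bound $\rho(f^{[n]})\leq\rho(f)^n$. A weight-$n$ monomial has $k$ factors with $m_j\geq 1$ and $\sum_j m_j=n$, hence $k\leq n$; as $|\mu_{u_j}|\leq\rho(f)$ and $\rho(f)\geq 1$,
$$
\Bigl|\prod_{j=1}^k\mu_{u_j}\Bigr|\leq\rho(f)^k\leq\rho(f)^n .
$$
Taking logarithms gives $\ent(f^{[n]})\leq n\cdot\ent(f)$.

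For the reverse inequality I would produce a single eigenvector realizing $\rho(f)^n$. If $u_0$ is an eigenvector of $f^*$ with $|\mu_{u_0}|=\rho(f)$, the weight-$n$ vector $\kq_1(u_0)^n\vac$ has eigenvalue $\mu_{u_0}^n$, of modulus $\rho(f)^n$ --- provided it is nonzero. By the commutation rules $\kq_1(u_0)^n\vac\neq 0$ precisely when $u_0$ has even cohomological degree (for odd $u_0$ it is killed by antisymmetry). The hard part is therefore the following point, which I expect to be the main obstacle: the spectral radius $\rho(f)$ is attained on $H^{\mathrm{even}}(S)=H^0\oplus H^2\oplus H^4$. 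When $\rho(f)=1$ this is immediate, since $1\in H^0(S)$ is already an even eigenvector of maximal modulus; so only the case $\rho(f)>1$ must be analysed, and there I would argue through Hodge theory. Writing $H^1(S)=H^{1,0}\oplus H^{0,1}$ and taking a holomorphic $1$-form $\omega$ which is an eigenvector of maximal modulus $|\mu_\omega|=\rho(f^*|_{H^1})$, the class $\omega\wedge\bar\omega\in H^{1,1}$ is nonzero --- because $\int_S i\,\omega\wedge\bar\omega\wedge\kappa>0$ for a Kähler form $\kappa$ --- and is an eigenvector of eigenvalue $|\mu_\omega|^2$. As $f^*$ preserves the lattice $H^1(S,\IZ)$ its determinant is $\pm 1$, whence $\rho(f^*|_{H^1})\geq 1$ and therefore $\rho(f^*|_{H^1})\leq\rho(f^*|_{H^1})^2\leq\rho(f^*|_{H^{\mathrm{even}}})$. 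Applying the same estimate to $f^{-1}$, and noting that the even spectrum is stable under $c\mapsto 1/c$ (Poincaré duality $H^2\times H^2\to H^4$ together with $f^*=\id$ on $H^0\oplus H^4$), bounds $\rho(f^*|_{H^3})=\rho((f^{-1})^*|_{H^1})$ by $\rho(f^*|_{H^{\mathrm{even}}})$ as well. This proves $\rho(f)=\rho(f^*|_{H^{\mathrm{even}}})$ for Kähler surfaces, which by Cantat's classification of positive-entropy automorphisms exhausts the case $\rho(f)>1$. Combining the two inequalities gives $\rho(f^{[n]})=\rho(f)^n$, that is $\ent(f^{[n]})=n\cdot\ent(f)$.
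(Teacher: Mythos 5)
Your proof is correct, and its core is the same as the paper's: identify $f_n^*$ with the weight-$n$ part of $\IF(f^*)$, diagonalize $f^*$, and observe that the Nakajima monomials $\kq_{m_1}(u_1)\cdots\kq_{m_k}(u_k)\vac$ are eigenvectors whose eigenvalues are the products of the $\mu_{u_j}$. Where you part ways with the paper is that the paper stops there and asserts $\rho(f^{[n]})=\rho(f)^n$ outright, while you isolate the two facts this assertion actually requires: $\rho(f)\geq 1$ for the upper bound, and, for the lower bound, the existence of a \emph{nonzero} weight-$n$ monomial realizing $\rho(f)^n$ --- the natural candidate $\kq_1(u_0)^n\vac$ vanishes when $u_0$ has odd cohomological degree, so one must know that $\rho(f)$ is attained on $H^{\mathrm{even}}(S)$. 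This parity obstruction is passed over in silence in the paper, and there is no cheap combinatorial way around it (monomials with pairwise distinct weights give at best modulus $\rho(f)^{O(\sqrt{n})}$, and pairing $u_0$ with $\bar u_0$ only gives $\rho(f)^2$), so your Hodge-theoretic argument --- taking $\omega\wedge\bar\omega$ for a holomorphic eigenform $\omega$ of maximal modulus, with Poincar\'e duality and the inversion-invariance of the even spectrum handling $H^3$ --- is the right fix and genuinely completes the lower bound in the K\"ahler case. One citation should be sharpened: Cantat's classification concerns \emph{topological} entropy, whereas $\rho(f)$ here is a spectral radius on cohomology; to rule out $\rho(f)>1$ on a non-K\"ahler surface you should chain his theorem with Yomdin's inequality $h_{\mathrm{top}}(f)\geq\log\rho(f^*)$, valid for any $C^\infty$ diffeomorphism, which gives $\rho(f)>1\Rightarrow h_{\mathrm{top}}(f)>0\Rightarrow S$ K\"ahler. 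With that precision your argument is complete --- indeed more complete than the paper's own proof, at the price of invoking deep external results (Hodge theory, Cantat, Yomdin) where the paper invokes none.
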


\begin{proof}
Il suffit de le montrer lorsque $f^*$ est diagonalisable. Si $u_1,\ldots,u_k$ est une base de diagonalisation de $f^*$, de valeurs propres $\lambda_1,\ldots,\lambda_k$, on a :
$$
(f^{[n]})^*\left(\kq_{n_1}(u_{i_1})\cdots\kq_{n_k}(u_{i_k})\vac\right)=\left(\prod_{j=1}^k\lambda_{i_j}\right)\kq_{n_1}(u_{i_1})\cdots\kq_{n_k}(u_{i_k})\vac
$$
donc $\rho(f^{[n]})=\rho(f)^n$ puis $\ent(f^{[n]})=n\cdot\ent(f)$.
\end{proof}

\section{Etude des points fixes}

\subsection{Calcul diff\'erentiel sur l'espace de Douady de points}

Soit $X$ un espace analytique, $x\in X$ et $\pzm_{X,x}\subset\cO_{X,x}$ l'id\'eal maximal des germes de fonctions holomorphes s'annulant en $x$. L'espace tangent de $X$ en $x$ est par d\'efinition :
$$
T_xX:=\Hom_\IC(\pzm_{X,x}/\pzm_{X,x}^2,\IC).
$$
Soit $\cD$ l'espace analytique r\'eduit au point muni de l'alg\`ebre des nombres duaux $\IC[\varepsilon]/\varepsilon^2$ et notons $X(\cD)_x$ l'ensemble des morphismes $t\colon\cD\to X$ envoyant le point de $\cD$ sur $x$. On a un isomorphisme d'espaces vectoriels sur $K$ :
\begin{equation}
\label{tangent1}
T_xX\cong X(\cD)_x.
\end{equation}
Soit $Y$ un autre espace analytique, $f:X\to Y$ un morphisme et $y:=f(x)$. Notons $f^\#_y\colon\cO_{Y,y}\to\cO_{X,x}$ le morphisme d'alg\`ebres locales induit. Puisque $f^\#_y(\pzm_{Y,y})\subset\pzm_{X,x}$, il induit une application lin\'eaire $\overline{f^\#_y}\colon\pzm_{Y,y}/\pzm_{Y,y}^2\to\pzm_{X,x}/\pzm_{X,x}^2$ dont le dual est par d\'efinition l'application tangente \`a $f$ en $x$ 
$$
T_xf\colon T_xX\to T_yY, \quad (T_xf)(v)=v\circ\overline{f^\#_y}.
$$
Si l'on utilise la description (\ref{tangent1}) de l'espace tangent, il est facile de voir que $T_xf$ est donn\'ee par la formule
\begin{equation}
\label{tangent2}
(T_xf)(t)=f\circ t.
\end{equation}

Si $X=S^{[n]}$ est l'espace de Douady repr\'esentant le foncteur des familles plates de sous-espaces analytiques de $S$ de dimension nulle et de longueur $n$, on sait qu'en un point $\xi\in X$ correspondant au sous-espace analytique $\xi\subset S$ de faisceau d'id\'eaux $\cI_\xi\subset\cO_S$ on a
\begin{equation}
\label{tangent3}
T_\xi X\cong\Hom_{\cO_S}\left(\cI_\xi,\cO_S/\cI_\xi\right).
\end{equation}
Si $\Sigma$ est une autre surface analytique, $Y=\Sigma^{[n]}$ et $f\colon S\to\Sigma$ est un isomorphisme, l'isomorphisme $f^{[n]}\colon X\to Y$ est donn\'e fonctoriellement ainsi : si $T$ est un espace analytique et $\cZ\subset S\times T$ une famille de sous-espaces analytiques de $S$ de dimension nulle et de longueur $n$, plate sur $T$, alors $f^{[n]}(\cZ)=(f\times \id_T)(\cZ)\in Y(T)$. La description (\ref{tangent1}) de l'espace tangent montre que, de ce point de vue, en notant $\zeta:=f(\xi)$,
\begin{equation}
\label{tangent4}
(T_\xi f^{[n]})(\cZ)=(f\times \id_\cD)(\cZ)\in T_\zeta Y,
\end{equation}
pour tout famille plate $\cZ\subset S\times\cD$ telle que $\cZ_{|S\times *}=\xi$. Avec la description (\ref{tangent3}), on calcule que $T_\xi f^{[n]}\colon\Hom_{\cO_S}\left(\cI_\xi,\cO_S/\cI_\xi\right)\to\Hom_{\cO_\Sigma}\left(\cI_\zeta,\cO_\Sigma/\cI_\zeta\right)$ est donn\'ee par la formule
\begin{equation}
\label{tangent5}
(T_\xi f^{[n]})(\phi)=(f^\#)^{-1}\circ f_*(\phi)\circ f^\#_{|\cI_\xi}.
\end{equation}
Pr\'ecis\'ement, pour $\phi\colon\cI_\xi\to\cO_S/\cI_\xi$, $(T_\xi f^{[n]})(\phi)$ est la compos\'ee
\begin{equation}
\label{tangent6}
\cI_\zeta\xrightarrow{f^\#} f_*\cI_\xi\xrightarrow{f_*(\phi)}f_*\left(\cO_S/\cI_\xi\right)\underset{(a)}{\xrightarrow{\sim}} f_*\cO_S/f_*\cI_\xi\xrightarrow{(f^\#)^{-1}}\cO_\Sigma/\cI_\zeta
\end{equation}
o\`u $(a)$ est d\^u au fait que $f$ est un morphisme affine (puisque c'est un isomorphisme), donc $R^1f_*\cI_\xi=0$. En effet, l'isomorphisme (\ref{tangent3}) se construit par recollement sur des ouverts affines, donc on peut supposer que $S$ et $\Sigma$ sont affines. Soit $A:=\cO(S)$, $B:=\cO(\Sigma$), $I:=\cI_\xi\subset A$ et $J:=\cI_\zeta\subset B$. L'id\'eal $I(\cZ)$ de $\cZ$ (dans (\ref{tangent4})) est engendr\'e par des \'el\'ements de la forme $\alpha_i+\varepsilon\beta_i$ avec $\alpha_i,\beta_i\in A$ o\`u les $\alpha_i$ engendrent l'i\'eal $I$ et l'isomorphisme (\ref{tangent3}) consiste \`a faire correspondre \`a $\cZ$ le morphisme $\phi\colon I\to A/I$ d\'efini par $\phi(\alpha_i)=\beta_i$ (ce qui est possible par platitude, voir Eisenbud{\&}Harris \cite{EisenbudHarris} ou Douady \cite{Douady}). Soit alors $\phi\in\Hom_A(I,A/I)$. La famille plate $\cZ$ correspondante a pour id\'eal $I(Z)=\langle \alpha+\varepsilon\phi(\alpha),\alpha\in I\rangle$ donc $(f\times \id_\cD)(\cZ)$ a pour id\'eal $(f^\#\times \id)^{-1}I(\cZ)$, o\`u $f^\#\colon B\xrightarrow{\sim}A$ est tel que $(f^\#)^{-1}(I)=J$. Donc 
\begin{align*}
(f^\#\times \id)^{-1}I(\cZ)&=\langle (f^\#)^{-1}(\alpha)+\varepsilon(f^\#)^{-1}(\phi(\alpha)),\alpha\in I\rangle\\
&=\langle \gamma+\varepsilon(f^\#)^{-1}(\phi(f^\#(\gamma))),\gamma\in J\rangle
\end{align*}
et \`a cette famille correspond le morphisme
\begin{equation}\label{tangent7}
(T_\xi f^{[n]})(\phi)\colon J\xrightarrow{f^\#}I\xrightarrow{\phi}A/I\xrightarrow{(f^\#)^{-1}} B/J.
\end{equation}

Notons les projections de la famille universelle ainsi :
$$
\xymatrix{\Xi_n\ar[r]^q\ar[d]^p&S\\S^{[n]}}
$$
A tout fibr\'e vectoriel $F$ sur $S$ on associe le fibr\'e vectoriel \emph{tautologique} $F^{[n]}:=p_*q^*F$ sur $S^{[n]}$ (il est localement libre car $p$ est plat) de rang $\rg(F^{[n]})=n\cdot\rg(F)$. La fibre de $F^{[n]}$ en $\xi\in S^{[n]}$ est $F^{[n]}(\xi)=\Gamma(\xi,F)$. En particulier 
$$
(T_S)^{[n]}(\xi)=\Gamma(\xi,T_S)=\Hom_{\cO_S}(\Omega_S,\cO_\xi)=\Der_\IC(\cO_S,\cO_\xi).
$$
Si $f\colon S\to\Sigma$ est un isomorphisme et $\zeta:=f(\xi)$ on a un isomorphisme
$$
(T_f)^{[n]}(\xi)\colon (T_S)^{[n]}(\xi)\xrightarrow{\sim}(T_\Sigma)^{[n]}(\zeta)
$$
d\'efini ainsi : \`a toute section $\sigma\colon\xi\to T_S$ on associe la compos\'ee
\begin{equation}
\label{tangent8}
(T_f)^{[n]}(\xi)(\sigma)\colon\zeta\xrightarrow{f^{-1}}\xi\xrightarrow{\sigma}T_S\xrightarrow{T_f}T_\Sigma.
\end{equation}
On v\'erifie ais\'ement qu'avec la description en termes de d\'erivations, l'application $(T_f)^{[n]}(\xi)\colon\Der_\IC(\cO_S,\cO_\xi)\to\Der_\IC(\cO_\Sigma,\cO_\zeta)$ est d\'efinie pour $\delta\in\Der_\IC(\cO_S,\cO_\xi)$ par la compos\'ee
\begin{equation}
\label{tangent9}
(T_f)^{[n]}(\xi)(\delta)\colon\cO_\Sigma\xrightarrow{f^\#}f_*\cO_S\xrightarrow{f_*\delta}f_*\cO_\xi\xrightarrow{(f^\#)^{-1}}\cO_\zeta.
\end{equation}
On dispose par ailleurs d'une application naturelle
$$
(T_S)^{[n]}(\xi)=\Der_\IC(\cO_S,\cO_\xi)\xrightarrow{\iota_S(\xi)}\Hom_{\cO_S}(\cI_\xi,\cO_\xi)=T_\xi S^{[n]},\quad \delta\mapsto\delta_{|\cI_\xi}
$$

\begin{remark}
Cette construction m'a \'et\'e expliqu\'ee par Manfred Lehn. A ma connaissance, elle ne figure pas dans la litt\'erature.
\end{remark}

Observons que les deux calculs de diff\'erentielle sont compatibles :

\begin{lemma}
Le diagramme suivant est commutatif :
$$
\xymatrix{(T_S)^{[n]}(\xi)\ar[r]^{\iota_S(\xi)}\ar[d]_{(T_f)^{[n]}(\xi)}& T_\xi S^{[n]}\ar[d]^{T_\xi f^{[n]}}\\
(T_\Sigma)^{[n]}(\zeta)\ar[r]^{\iota_\Sigma(\zeta)}& T_\zeta\Sigma^{[n]}}
$$
\end{lemma}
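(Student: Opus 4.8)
The plan is to check the commutativity after restricting to the affine local model already used to write down formulas (\ref{tangent7}) and (\ref{tangent9}): there all four arrows of the square have explicit descriptions, and the assertion reduces to a one-line diagram chase. Since the horizontal maps $\iota_S(\xi),\iota_\Sigma(\zeta)$ and the identification (\ref{tangent3}) are built by gluing over affine opens, and the vertical maps are described by the same kind of gluing, an affine verification will suffice; I would begin by remarking this so that the global statement follows from the local one.

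First I would put $S=\Spec A$, $\Sigma=\Spec B$, and translate $f$ into an isomorphism of $\IC$-algebras $f^\#\colon B\xrightarrow{\sim}A$ satisfying $f^\#(\cI_\zeta)=\cI_\xi$, i.e. $(f^\#)^{-1}(I)=J$ with $I:=\cI_\xi$, $J:=\cI_\zeta$. Let $\bar g\colon A/I\xrightarrow{\sim}B/J$ denote the isomorphism induced by $(f^\#)^{-1}$, which descends precisely because $(f^\#)^{-1}(I)=J$. In these terms the four maps read as follows: the top arrow $\iota_S(\xi)$ sends a derivation $\delta\in\Der_\IC(A,A/I)$ to its restriction $\delta_{|I}\in\Hom_A(I,A/I)$; by (\ref{tangent9}) the left arrow $(T_f)^{[n]}(\xi)$ sends $\delta$ to $\delta':=\bar g\circ\delta\circ f^\#\in\Der_\IC(B,B/J)$; the bottom arrow $\iota_\Sigma(\zeta)$ sends $\delta'$ to $\delta'_{|J}$; and by (\ref{tangent7}) the right arrow $T_\xi f^{[n]}$ sends $\phi\in\Hom_A(I,A/I)$ to $\bar g\circ\phi\circ(f^\#_{|J})$, where $f^\#_{|J}\colon J\to I$ since $f^\#(J)=I$.

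Then I would evaluate the two composites on an arbitrary derivation $\delta$ and an arbitrary element $b\in J$. Going right then down yields $\bar g\bigl((\delta_{|I})(f^\#(b))\bigr)$, which is legitimate because $f^\#(b)\in I$; going down then right yields $\delta'(b)=\bar g\bigl(\delta(f^\#(b))\bigr)$. As $f^\#(b)$ lies in $I$, evaluating the full derivation $\delta$ on it agrees with evaluating its restriction $\delta_{|I}$, so the two expressions coincide and the square commutes. The computation is purely formal; the only point deserving attention — and the conceptual content of the lemma — is that restriction of a derivation to the ideal commutes with conjugation by $f^\#$, which holds exactly because $f^\#$ carries $\cI_\zeta$ into $\cI_\xi$. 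I therefore expect no genuine obstacle beyond keeping straight the direction of $f^\#$ versus $(f^\#)^{-1}$ and the descent $\bar g$; the rest is bookkeeping.
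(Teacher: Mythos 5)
Your proof is correct and is essentially the paper's own argument: both evaluate the two composites on a derivation $\delta$ and observe that they agree because $f^\#$ carries $\cI_\zeta$ onto (the direct image of) $\cI_\xi$, so restricting $\delta$ to the ideal commutes with conjugation by $f^\#$. The only difference is presentational --- the paper writes the two composites sheaf-theoretically via $f_*$, while you work in the affine model where formulas (\ref{tangent7}) and (\ref{tangent9}) were derived, which is a legitimate reduction the paper itself makes.
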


\begin{proof}
Soit $\delta\in(T_S)^{[n]}(\xi)$ une d\'erivation. Avec (\ref{tangent6}) on calcule que $(T_\xi f^{[n]}\circ\iota_S(\xi))(\delta)$ est le morphisme
$$
\cI_\zeta\xrightarrow{f^\#} f_*\cI_\xi\xrightarrow{f_*(\delta_{|\cI_\xi})}f_*\cO_\xi\xrightarrow{(f^\#)^{-1}}\cO_\zeta
$$ 
tandis qu'avec (\ref{tangent8}) $(\iota_\Sigma(\zeta)\circ(T_f)^{[n]}(\xi))(\delta)$ est le morphisme
$$
\cI_\zeta\hookrightarrow \cO_\zeta\xrightarrow{f^\#}f_*\cO_S\xrightarrow{f_*\delta}f_*\cO_\xi\xrightarrow{(f^\#)^{-1}}\cO_\zeta.
$$
Ces deux morphismes sont \'egaux puisque $f^\#(\cI_\zeta)=f_*\cI_\xi$.
\end{proof}

\begin{proposition}
Soit $\xi\in S^{[n]}$. Si $\xi\subset S$ est r\'eduit, alors $\iota_S(\xi)$ est un isomorphisme.
\end{proposition}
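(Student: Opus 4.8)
Le plan est de se ramener \`a un calcul local, point par point, en exploitant le fait que $\xi$ r\'eduit est constitu\'e de $n$ points distincts $x_1,\ldots,x_n$ de $S$. Dans ce cas $\cO_\xi\cong\bigoplus_{i=1}^n\IC_{x_i}$ est une somme de faisceaux gratte-ciel et le faisceau d'id\'eaux $\cI_\xi$ se localise en l'id\'eal maximal $\pzm_{x_i}$ en chaque $x_i$. D'abord je montrerais que, le support de $\cO_\xi$ \'etant constitu\'e des points distincts $x_i$, les deux espaces se scindent en sommes directes de contributions locales :
$$
\Der_\IC(\cO_S,\cO_\xi)=\bigoplus_{i=1}^n\Der_\IC(\cO_{S,x_i},\IC)=\bigoplus_{i=1}^nT_{x_i}S,\qquad \Hom_{\cO_S}(\cI_\xi,\cO_\xi)=\bigoplus_{i=1}^n\Hom_{\cO_{S,x_i}}(\pzm_{x_i},\IC).
$$
Comme $\iota_S(\xi)$ est donn\'ee par la restriction $\delta\mapsto\delta_{|\cI_\xi}$, op\'eration de nature locale, elle respecte ces d\'ecompositions.

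Ensuite je v\'erifierais que sur le $i$-\`eme facteur l'application $\delta\mapsto\delta_{|\pzm_{x_i}}$ est un isomorphisme. Le point-cl\'e est que tout morphisme $\cO_{S,x_i}$-lin\'eaire $\phi\colon\pzm_{x_i}\to\IC$ annule $\pzm_{x_i}^2$ : pour $f,g\in\pzm_{x_i}$ on a $\phi(fg)=f\cdot\phi(g)=0$ puisque $f$ agit sur $\IC=\cO_{S,x_i}/\pzm_{x_i}$ par sa valeur $f(x_i)=0$. On obtient donc $\Hom_{\cO_{S,x_i}}(\pzm_{x_i},\IC)\cong\Hom_\IC(\pzm_{x_i}/\pzm_{x_i}^2,\IC)=T_{x_i}S$, et sous cette identification $\delta\mapsto\delta_{|\pzm_{x_i}}$ n'est autre que l'identification canonique entre d\'erivations et vecteurs tangents en $x_i$, donc un isomorphisme. \'Etant un isomorphisme sur chaque facteur, $\iota_S(\xi)$ en est un.

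L'obstacle principal, en fait assez b\'enin, est de justifier proprement le scindage en contributions locales ainsi que l'\'egalit\'e $\Hom_{\cO_{S,x_i}}(\pzm_{x_i},\IC)=\Hom_\IC(\pzm_{x_i}/\pzm_{x_i}^2,\IC)$ ; tout le reste est formel. On peut au passage contr\^oler la coh\'erence du r\'esultat : le fibr\'e tautologique $(T_S)^{[n]}$ est de rang $2n=\dim S^{[n]}$, de sorte qu'\'etablir la seule injectivit\'e de $\iota_S(\xi)$ suffirait aussi \`a conclure, par \'egalit\'e des dimensions.
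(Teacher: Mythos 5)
Votre d\'emonstration est correcte, mais elle suit une route r\'eellement diff\'erente de celle de l'article. Vous \'etablissez la bijectivit\'e directement : scindage des deux espaces en contributions locales, $\Der_\IC(\cO_S,\cO_\xi)\cong\bigoplus_i T_{x_i}S$ et $\Hom_{\cO_S}(\cI_\xi,\cO_\xi)\cong\bigoplus_i\Hom_{\cO_{S,x_i}}(\pzm_{x_i},\IC)$, puis identification, sur chaque facteur, de la restriction $\delta\mapsto\delta_{|\pzm_{x_i}}$ avec l'isomorphisme canonique $\Der_\IC(\cO_{S,x_i},\IC)\cong\Hom_\IC(\pzm_{x_i}/\pzm_{x_i}^2,\IC)=T_{x_i}S$, le point-cl\'e \'etant que toute application $\cO_{S,x_i}$-lin\'eaire $\pzm_{x_i}\to\IC$ annule $\pzm_{x_i}^2$ ; ce point est exact. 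L'article proc\`ede autrement : la lissit\'e de $S^{[n]}$ donne $\dim T_\xi S^{[n]}=2n=\dim(T_S)^{[n]}(\xi)$, ce qui ram\`ene l'\'enonc\'e \`a la seule injectivit\'e (pr\'ecis\'ement la r\'eduction que vous \'evoquez en fin de texte comme variante), et celle-ci r\'esulte d'un lemme d'alg\`ebre commutative : si $I\subset A$ est un produit d'id\'eaux maximaux deux \`a deux distincts, toute $K$-d\'erivation $\delta\colon A\to A/I$ nulle sur $I$ est nulle ; la preuve y traite tous les points simultan\'ement, en d\'ecomposant $a=x_i+\lambda_i$ et en appliquant la r\`egle de Leibniz \`a $x_1\cdots x_n\in I$. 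Les deux arguments se valent : le v\^otre est plus explicite et autonome (il identifie concr\`etement les deux membres et fournit l'isomorphisme, sans invoquer la dimension de $T_\xi S^{[n]}$), au prix de la justification -- que vous signalez honn\^etement -- du scindage local, \`a savoir que les homomorphismes vers un faisceau \`a support fini se calculent germe par germe, ce qui utilise la coh\'erence de $\cI_\xi$ et de $\Omega_S$ ; celui de l'article \'evite tout formalisme de localisation et son lemme, purement alg\'ebrique, vaut pour une $K$-alg\`ebre arbitraire, mais il repose en contrepartie sur le comptage de dimensions.
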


\begin{proof}
Puisque $S$ est une surface lisse, $S^{[n]}$ est lisse de dimension $2n$ et $\dim T_\xi S^{[n]}=2n=\dim (T_S)^{[n]}(\xi)$. Il suffit donc de montrer que $\iota_S(\xi)$ est injective. Puisque le support de $\xi$ est fini, on peut se placer sur un ouvert affine de $S$ contenant $\xi$, voire supposer que $S$ est affine. Le r\'esultat d\'ecoule alors du lemme suivant avec $A=\cO(S)$, $I=\cI_\xi$ produit d'id\'eaux maximaux deux \`a deux distincts si $\xi$ est r\'eduit :

\begin{lemma}
 Soit $K$ un corps, $A$ une $K$-alg\`ebre, $I\subset A$ un id\'eal et $\delta\colon A\to A/I$ une $K$-d\'erivation. Si $I$ est un produit d'id\'eaux maximaux deux \`a deux distincts, alors $\delta_{|I}=0$ implique $\delta=0$.
\end{lemma}

\begin{proof}[D\'emonstration du lemme]
Notons $I=\prod_{i=1}^n \pzm_i$, les $\pzm_i$ \'etant des id\'eaux maximaux deux \`a deux distincts. Si $n=0$ le r\'esultat est trivial et si $n=1$ il est clair gr\^ace \`a la d\'ecomposition vectorielle $A\cong \pzm\oplus K$. Nous supposons donc que $n\geq 2$. Soit $a\in A$, que nous d\'ecomposons de mani\`ere unique pour tout $i$ sous la forme $a=x_i+\lambda_i$ avec $x_i\in\pzm_i$, $\lambda_i\in K$. Rappelons que $A/I\cong\bigoplus_{i=1}^n A/\pzm_i$. Puisque $x_1\cdot\ldots\cdot x_n\in I$ on calcule 
$$
0=\delta(x_i\cdot\ldots\cdot x_n)=\sum_{i=1}^n x_1\cdot\ldots\cdot \widehat{x_i}\cdot\ldots\cdot x_n \delta(x_i)
$$
o\`u $x_1\cdot\ldots\cdot \widehat{x_i}\cdot\ldots\cdot x_n \delta(x_i)$ est la composante dans $A/\pzm_i$. En notant que $x_j=x_i+(\lambda_i-\lambda_j)$ on obtient
$$
x_1\cdot\ldots\cdot \widehat{x_i}\cdot\ldots\cdot x_n \delta(x_i)=\prod_{j\neq i}(\lambda_i-\lambda_j)\delta(x_i)\in A/\pzm_i.
$$
Si $\prod\limits_{j\neq i}(\lambda_i-\lambda_j)=0$ pour tout $i$, alors $\lambda_1=\ldots=\lambda_n$ donc $x_1=\ldots=x_n\in\bigcap_{i=1}^n\pzm_i=~I$, donc $\delta(a)=0$. Sinon, l'un des $x_i$ est tel que $\delta(x_i)=0$ donc $\delta(a)=0$.
\end{proof}
\end{proof}

Il r\'esulte de cette \'etude qu'en un point $\xi\in S^{[n]}$ tel que $\xi\subset S$ est r\'eduit, on peut facilement calculer la diff\'erentielle de $f^{[n]}$ en $\xi$ par la formule (\ref{tangent7}) qui ne d\'epend que de $f$. En particulier, si $f\colon S\xrightarrow{\sim}S$ est un automorphisme et $\xi\in S^{[n]}$ un point fixe de $f^{[n]}$ de support r\'eduit, alors la diff\'erentielle de $f^{[n]}$ en $\xi$ est l'application associant \`a une section $\sigma\colon\xi\to T_S$ la section $T_f\circ\sigma\circ f^{-1}\colon\xi\to T_S$.

\subsection{Etude des points fixes}

Si $X$ est une vari\'et\'e analytique complexe lisse, $f\in\Aut(X)$ et $x\in X$ un point fixe de $f$, rappelons qu'il est dit \emph{non d\'eg\'en\'er\'e} si $\det(T_xf -\id)\neq 0$. Si $f$ est lin\'earisable au voisinage de $x$ (typiquement quand $f$ est d'ordre fini) et si $x$ est non d\'eg\'en\'er\'e, alors il est isol\'e.

Soit $f\in\Aut(S)$ et $n\geq 2$. Si $F\subset S$ est une sous-vari\'et\'e fixe par $f$, la sous-vari\'et\'e $F^{[n]}\subset S^{[n]}$ n'est en g\'en\'eral pas une composante de points fixes de $f^{[n]}$ : le probl\`eme vient essentiellement des points \'epais. Pr\'ecis\'ement, les sous-espaces analytiques $\xi\in~S^{[n]}$ fixes sous $f^{[n]}$ sont r\'eunions \`a supports disjoints de sous-espaces \emph{r\'eduits} de la forme :
\begin{itemize}
\item $\{x\}$ o\`u $x$ est un point fixe de $f$,
\item $\{x,f(x),\ldots,f^{k-1}(x)\}$ o\`u $x$ est un point p\'eriodique d'ordre $k$ de $f$,
\end{itemize}
et \emph{potentiellement} de sous-espaces \emph{non r\'eduits} (o\`u \emph{\'epais}) de la forme :
\begin{itemize}
\item $\{\xi_x\}$ o\`u $x$ est un point fixe de $f$ et $\xi_x$ un sous-espace \'epais port\'e en $x$,
\item $\{\xi_x,f(\xi_x),\ldots,f^{k-1}(\xi_x)\}$ o\`u $x$ est un point p\'eriodique d'ordre $k$ de $f$ et $\xi_x$ un sous-espace \'epais port\'e en $x$,
\end{itemize}
de telle sorte que la somme des points fixes et des longueurs des orbites, compt\'es avec leur multiplicit\'e \'eventuelle, fasse $n$.
Pour calculer $T_\xi f^{[n]}$ en un tel point, il suffit de savoir le faire pour chaque type possible, en vertu du lemme suivant.

\begin{lemma}
Si $\xi_1,\ldots,\xi_k$ sont de l'un des quatre types pr\'ec\'edents et de supports disjoints, avec $\xi_i\in S^{[n_i]}$ et $\xi=\xi_1\cup\ldots\cup\xi_k\in S^{[n]}$, alors $T_\xi S^{[n]}\cong\bigoplus_{i=1}^k T_{\xi_i}S^{[n_i]}$ et pour cette d\'ecomposition $T_\xi f^{[n]}=\bigoplus_{i=1}^kT_{\xi_i}f^{[n_i]}$.
\end{lemma}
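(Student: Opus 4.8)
The plan is to use the intrinsic description (\ref{tangent3}) of the tangent space, $T_\xi S^{[n]}\cong\Hom_{\cO_S}(\cI_\xi,\cO_\xi)$, and to exploit that the supports $\Support(\xi_i)$ are pairwise disjoint finite sets. Write $Z:=\Support(\xi)=\coprod_{i=1}^k\Support(\xi_i)$. Since the ideals $\cI_{\xi_i}$ have disjoint zero loci they are pairwise comaximal, so $\cI_\xi=\bigcap_i\cI_{\xi_i}$ and, by the Chinese remainder theorem, there is an $\cO_S$-linear isomorphism $\cO_\xi\cong\bigoplus_{i=1}^k\cO_{\xi_i}$ of skyscraper sheaves. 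Moreover, near any point of $\Support(\xi_i)$ the remaining ideals are trivial, so $(\cI_\xi)_x=(\cI_{\xi_i})_x$ for $x\in\Support(\xi_i)$.

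The \emph{main step}, which I expect to be the principal obstacle, is to establish the splitting $\Hom_{\cO_S}(\cI_\xi,\cO_\xi)\cong\bigoplus_{i=1}^k\Hom_{\cO_S}(\cI_{\xi_i},\cO_{\xi_i})$. The subtlety is that $\cI_\xi$ itself does \emph{not} split as a module; the decomposition must come entirely from the target. The point is that $\cO_\xi$ is supported on the finite set $Z$, so the sheaf $\mathcal{H}om_{\cO_S}(\cI_\xi,\cO_\xi)$ is again a skyscraper supported on $Z$, whence
$$
\Hom_{\cO_S}(\cI_\xi,\cO_\xi)\cong\bigoplus_{x\in Z}\Hom_{\cO_{S,x}}\bigl((\cI_\xi)_x,(\cO_\xi)_x\bigr).
$$
Using the stalk identifications above, the summand at $x\in\Support(\xi_i)$ equals $\Hom_{\cO_{S,x}}\bigl((\cI_{\xi_i})_x,(\cO_{\xi_i})_x\bigr)$, and regrouping the points of $Z$ according to which piece $\Support(\xi_i)$ they belong to yields exactly the direct sum $\bigoplus_i T_{\xi_i}S^{[n_i]}$.

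For compatibility with the differential, observe that each of the four types is by construction invariant under the corresponding $f^{[n_i]}$, so $f$ carries $\Support(\xi_i)$ to itself and $f(\xi_i)=\xi_i$; in particular $f$ preserves the decomposition $Z=\coprod_i\Support(\xi_i)$. The differential is then computed by the purely local formula (\ref{tangent7}), $(T_\xi f^{[n]})(\phi)=(f^\#)^{-1}\circ f_*(\phi)\circ f^\#_{|\cI_\xi}$. Since $f^\#$ and $(f^\#)^{-1}$ act stalkwise and send the stalk at $x$ to the stalk at $f^{-1}(x)$ (resp.\ $f(x)$), both lying in the same piece $\Support(\xi_i)$, this formula respects the stalk decomposition above; restricted to the summand indexed by $\xi_i$ it reproduces verbatim the formula defining $T_{\xi_i}f^{[n_i]}$. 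Hence $T_\xi f^{[n]}=\bigoplus_{i=1}^k T_{\xi_i}f^{[n_i]}$, as claimed. Intuitively this reflects the fact that near a point with disjoint support $S^{[n]}$ is locally the product $\prod_i S^{[n_i]}$ with $f^{[n]}$ the product $\prod_i f^{[n_i]}$; the argument above makes this precise at the level of tangent spaces.
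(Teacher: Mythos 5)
Your proof is correct, but it follows a genuinely different route from the paper's. The paper first disposes of the case where all the $\xi_i$ are reduced (there $T_\xi S^{[n]}=\Gamma(\xi,T_S)=\bigoplus_i\Gamma(\xi_i,T_S)$ and the differential visibly respects the splitting), then reduces the general case to a purely algebraic fact: for comaximal ideals $I,J$ of a ring $A$ one has $\Hom_A(IJ,A/IJ)\cong\Hom_A(I,A/I)\oplus\Hom_A(J,A/J)$, the isomorphism being written out by explicit mutually inverse formulas built from a partition of unity $\alpha+\beta=1$ with $\alpha\in I$, $\beta\in J$ (namely $\theta\mapsto(\phi,\psi)$ with $\phi(x)=p_1\theta(x\beta)$, $\psi(y)=p_2\theta(\alpha y)$, and conversely $\theta(xy)=\phi(x)y+\psi(y)x$), after which compatibility with (\ref{tangent7}) is checked on these formulas. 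You instead make the decomposition come entirely from the target: $\mathcal{H}om_{\cO_S}(\cI_\xi,\cO_\xi)$ is a coherent sheaf supported on the finite set $Z=\Support(\xi)$, hence its global sections are the direct sum of its stalks, and stalkwise $\cI_\xi$ and $\cO_\xi$ agree with $\cI_{\xi_i}$ and $\cO_{\xi_i}$. Both arguments exploit the same comaximality, but the trade-off differs: your version needs the sheaf-theoretic inputs that $\mathcal{H}om$ of coherent sheaves is coherent and that its stalks compute $\Hom$ of stalks (you use this implicitly and should cite it), while it buys you freedom from constructing and verifying the explicit inverse maps, and it makes the equivariance statement nearly automatic, since (\ref{tangent7}) acts stalkwise and $f$ permutes the points of $Z$ within each $\Support(\xi_i)$; the paper's elementary version instead has to observe that its explicit formulas commute with the differential. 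One point you should phrase more carefully: the invariance $f(\xi_i)=\xi_i$ is not ``by construction'' part of the four types (a thick subspace carried by a fixed point of $f$ need not be $f$-invariant); it follows from the disjointness of the supports once $\xi$ itself is assumed to be a fixed point of $f^{[n]}$, which is the situation in which the lemma is applied.
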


\begin{proof}
Si les $\xi_i$ sont tous r\'eduits, $\Gamma(\xi,S)=\bigoplus_{i=1}^k \Gamma(\xi_i,S)$ et $T_\xi f^{[n]}$ respecte cette d\'ecomposition donc le r\'esultat est clair. Dans le cas g\'en\'eral, il suffit d'observer que si $I$ et $J$ sont deux id\'eaux d'un anneau $A$, tels que $I+J=A$, alors on a un isomorphisme de $A$-modules :
$$
\Hom_A(IJ,A/IJ)\cong\Hom_A(I,A/I)\oplus\Hom_A(J,A/J).
$$
En effet, on a $A/IJ\cong A/I\oplus A/J$, dont nous notons $p_1$ et $p_2$ les projections. Prenons $\alpha\in I$ et $\beta\in J$ tels que $\alpha+\beta=1$. A tout $\theta\in\Hom_A(IJ,A/IJ)$ on fait correspondre $\phi(x)=p_1\theta(x\beta)\in\Hom_A(I,A/I)$ et $\psi(y)=p_2\theta(\alpha y)\in\Hom_A(J,A/J)$. R\'eciproquement, \`a un couple $(\phi,\psi)$ on associe $\theta$ d\'efinie pour tout $x\in I$ et $y\in J$ par $\theta(xy)=\phi(x)y+\psi(y)x$ vu comme \'el\'ement de $A/IJ$. On v\'erifie ais\'ement que ces applications sont inverses l'une de l'autre et que la diff\'erentielle, calcul\'ee avec la formule (\ref{tangent7}), respecte la d\'ecomposition.
\end{proof}

\subsubsection{Points fixes r\'eduits} 

\begin{proposition}
Si $x_1,\ldots,x_n$ sont des points fixes isol\'es et non d\'eg\'en\'er\'es de $f$, alors $\xi:=\{x_1,\ldots,x_n\}\in S^{[n]}$ est un point fixe isol\'e et non d\'eg\'en\'er\'e de $f^{[n]}$.
\end{proposition}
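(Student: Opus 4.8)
The plan is to compute the differential $T_\xi f^{[n]}$ explicitly, show that it splits as a direct sum indexed by the individual fixed points, and then read off both non-degeneracy and isolation from that block form.

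First I would record that, since the (necessarily distinct) $x_1,\dots,x_n$ are fixed points of $f$, the subspace $\xi=\{x_1,\dots,x_n\}$ is reduced and satisfies $f^{[n]}(\xi)=\xi$; thus $\xi$ is a fixed point of $f^{[n]}$. Each $\{x_i\}$ is a reduced point of the first type listed above, and the supports are pairwise disjoint, so the decomposition lemma applies and yields a canonical identification
$$
T_\xi S^{[n]}\cong\bigoplus_{i=1}^n T_{x_i}S,\qquad T_\xi f^{[n]}=\bigoplus_{i=1}^n T_{x_i}f.
$$
I would cross-check this block form against the reduced-point description of the differential: since $\xi$ is reduced, $\iota_S(\xi)$ is an isomorphism by the preceding proposition, and the compatibility lemma identifies $T_\xi f^{[n]}$ with $(T_f)^{[n]}(\xi)$, which by (\ref{tangent8}) sends a section $\sigma\colon\xi\to T_S$ to $T_f\circ\sigma\circ f^{-1}$. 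Evaluated at $x_i$, where $f^{-1}(x_i)=x_i$ and $T_f$ carries $T_{x_i}S$ to $T_{f(x_i)}S=T_{x_i}S$, this is exactly $T_{x_i}f$ on the $i$-th summand, confirming the splitting.

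Non-degeneracy is then immediate. With respect to this decomposition,
$$
\det\bigl(T_\xi f^{[n]}-\id\bigr)=\prod_{i=1}^n\det\bigl(T_{x_i}f-\id\bigr),
$$
and every factor is nonzero because each $x_i$ is a non-degenerate fixed point of $f$; hence $\xi$ is non-degenerate as well.

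For isolation I would argue directly rather than invoke linearizability: choosing a holomorphic chart centred at $\xi$ and writing $g:=f^{[n]}-\id$ in these coordinates, one has $g(\xi)=0$, and the Jacobian of $g$ at $\xi$ equals $T_\xi f^{[n]}-\id$, which we have just shown is invertible. By the inverse function theorem $g$ is a local biholomorphism near $\xi$, so $\xi$ is an isolated zero of $g$, that is, an isolated fixed point of $f^{[n]}$. I do not expect any serious obstacle here; the only step demanding care is verifying that the differential genuinely decomposes as the asserted direct sum of the $T_{x_i}f$, which is precisely what the decomposition lemma together with the reduced-point formulas (\ref{tangent7})--(\ref{tangent8}) provides.
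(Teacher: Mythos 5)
Your proof is correct, but its second half follows a genuinely different route from the paper. For non-degeneracy the two arguments coincide in substance: the paper works directly with a section $\sigma\in\Gamma(\xi,T_S)$ and the formula $T_\xi f^{[n]}(\sigma)=T_f\circ\sigma\circ f^{-1}$, evaluating at each $x_j$; you package the same computation through the decomposition lemma to get the block form $T_\xi f^{[n]}=\bigoplus_{i=1}^n T_{x_i}f$, whence $\det\bigl(T_\xi f^{[n]}-\id\bigr)=\prod_{i=1}^n\det\bigl(T_{x_i}f-\id\bigr)\neq 0$. The real divergence is isolation. The paper proves it by hand: a sequence of fixed points $\xi^i\to\xi$, assumed reduced after shrinking the neighbourhood, is lifted through the unramified covering $S^n\setminus\Delta\to S^{(n)}\setminus D$; a permutation $\tau_i$ with $f(x^i_j)=x^i_{\tau_i(j)}$ is extracted, made constant along a subsequence, forced to be the identity in the limit, and one contradicts the isolation of the $x_j$. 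You instead deduce isolation from non-degeneracy by the inverse function theorem applied to $g=f^{[n]}-\id$ in a chart. This is a standard and perfectly valid fact --- no linearizability is required, contrary to what the paper's preliminary remark might suggest: a non-degenerate fixed point of a holomorphic map is always isolated --- and it buys two things: it is shorter, and it shows that the hypothesis that the $x_i$ be isolated is actually redundant, since non-degeneracy alone suffices. What the paper's sequential argument buys in exchange is logical independence of the two conclusions: it establishes isolation of $\xi$ using only the isolation of the $x_j$ (not their non-degeneracy), whereas in your proof both conclusions hang on the non-degeneracy hypothesis.
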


\begin{proof}
Montrons que $\xi$ est un point fixe isol\'e. En se pla\c{c}ant dans un voisinage affine de $\xi$, si $\xi$ n'est pas isol\'e il existe une suite $(\xi^i)$ de points fixes de $f^{[n]}$ convergeant vers $\xi$. Quitte \`a restreindre le voisinage, on peut supposer que tous les $\xi^i$ sont des sous-espaces r\'eduits, et on pose $\xi^i=\{x_1^i,\ldots,x_n^i\}$. Les $0$-cycles $\sum_{j}x_j^i\in S^{(n)}$ correspondant via $\rho$ convergent donc vers $\rho(\xi)$ et puisque le rev\^etement $S^n\to S^{(n)}$ est non ramifi\'e au-dessus de $S^{(n)}\setminus D$, on peut choisir des rel\`evements de cette suite telle que, dans $S^n$, la suite $(x_1^i,\ldots,x_n^i)$ converge vers $(x_1,\ldots,x_n)$. Puisque $f^{[n]}(\xi^i)=\xi^i$, pour tout $i$ il existe une transposition $\tau_i\in\kS_n$ telle que $f(x_j^i)=x^i_{\tau_i(j)}$. La suite $i\mapsto\tau_i$ est \`a valeurs dans un ensemble fini, donc elle prend au moins une valeur une infinit\'e de fois. Quitte \`a extraire une sous-suite, on peut donc supposer que $\tau_i=\tau$ est constante. Par passage \`a la limite, $f(x_j)=x_{\tau(j)}$ pour tout $j$, ce qui force $\tau=\id$ puisque les $x_j$ sont fixes par $f$. On a donc trouv\'e des suites de points fixes tendant vers les $x_i$, contradiction.

Montrons que $\xi$ est non d\'eg\'en\'er\'e. Soit $\sigma\in T_\xi S^{[n]}$. Puisque $\xi$ est r\'eduit, $\sigma\in~\Gamma(\xi,T_S)$ et si $T_\xi f^{[n]}(\sigma)=\sigma$ cela signifie que $T_f\circ \sigma\circ f^{-1}=\sigma$, mais $f_{|\xi}=\id$ donc $T_f\circ \sigma=\sigma$ ce qui donne $(T_{x_j}f)(\sigma(x_j))=\sigma(x_j)$ pour tout $j$ et contredit l'hypoth\`ese de non d\'eg\'en\'erescence des $x_j$.
\end{proof}

\begin{proposition}
\label{prop:periodique}
Si $x$ est un point p\'eriodique d'ordre $n\geq 2$ de $f$, alors son orbite $\xi:=~\{x,f(x),\ldots,f^{n-1}(x)\}\in S^{[n]}$ est un point fixe d\'eg\'en\'er\'e de $f^{[n]}$ tel que  $\dim\ker(T_\xi f^{[n]}-~\id)=~2$.
\end{proposition}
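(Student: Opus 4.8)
The plan is to check that $\xi$ is fixed, then to write down the differential $T_\xi f^{[n]}$ explicitly using that $\xi$ is reduced, and finally to identify its $\id$-eigenspace by running once around the orbit.

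\textbf{Setup.} Write $x_j:=f^j(x)$ for $0\le j\le n-1$. Since $x$ has period $n$ these points are pairwise distinct, so $\xi=\{x_0,\dots,x_{n-1}\}$ is reduced and $f$ permutes them cyclically; hence $f^{[n]}(\xi)=f(\xi)=\xi$, i.e. $\xi$ is a fixed point of $f^{[n]}$. As $\xi$ is reduced, $S^{[n]}$ is smooth at $\xi$ and the proposition on $\iota_S(\xi)$ gives $T_\xi S^{[n]}\cong\Gamma(\xi,T_S)=\bigoplus_{j=0}^{n-1}T_{x_j}S$. By the formula for the differential at a reduced fixed point recalled above, $T_\xi f^{[n]}$ sends a section $\sigma$ to $T_f\circ\sigma\circ f^{-1}$. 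Putting $A_j:=T_{x_j}f\colon T_{x_j}S\to T_{x_{j+1}}S$ and writing $\sigma=(v_0,\dots,v_{n-1})$ with $v_j=\sigma(x_j)$, this reads $(T_\xi f^{[n]}\sigma)(x_j)=A_{j-1}(v_{j-1})$ (indices mod $n$): thus $T_\xi f^{[n]}$ is the cyclic operator carrying the $x_{j-1}$-summand to the $x_j$-summand through $A_{j-1}$.

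\textbf{Main step.} The heart of the argument is to compute $\ker(T_\xi f^{[n]}-\id)$. A section is fixed if and only if $v_j=A_{j-1}(v_{j-1})$ for every $j$; it is therefore determined by $v_0$ through $v_j=A_{j-1}\cdots A_0(v_0)$, and the sole remaining condition is the closing relation at $j=0$, namely $v_0=A_{n-1}\cdots A_0(v_0)$. By the chain rule $A_{n-1}\circ\cdots\circ A_0=T_x(f^n)$, so the projection $\sigma\mapsto v_0$ is an isomorphism
$$\ker\left(T_\xi f^{[n]}-\id\right)\ \xrightarrow{\ \sim\ }\ \ker\left(T_x(f^n)-\id\right)\subseteq T_xS.$$
In particular $\dim\ker(T_\xi f^{[n]}-\id)\le\dim T_xS=2$. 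This is the step that requires care: one must both identify $T_\xi f^{[n]}$ with the cyclic operator and check that the wrap-around constraint is exactly invariance under the return differential $T_x(f^n)$, so that the fixed space injects into the single summand $T_xS$ and is no larger than $\ker(T_x(f^n)-\id)$.

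\textbf{Conclusion.} It remains to evaluate $\ker(T_x(f^n)-\id)$. The operative input is $T_x(f^n)=\id_{T_xS}$, which holds as soon as $f^n=\id$ (the orbit being a genuine free orbit of length $n$, with $f^n$ fixing the whole surface). Granting this, the kernel is all of $T_xS$ and $\dim\ker(T_\xi f^{[n]}-\id)=2$; being positive, $\xi$ is a degenerate fixed point. The result is also transparent geometrically: the map $y\mapsto\{y,f(y),\dots,f^{n-1}(y)\}$ embeds a neighbourhood of $x$ in $S$ into the fixed locus of $f^{[n]}$, exhibiting a two-dimensional family of fixed points through $\xi$ and so forcing $\dim\ker(T_\xi f^{[n]}-\id)\ge 2$, which together with the bound $\le 2$ above yields the claimed value. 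The whole difficulty thus concentrates in the cyclic description of $T_\xi f^{[n]}$ and the chain-rule reduction around the orbit; the dimension count is then immediate.
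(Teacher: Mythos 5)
Your setup and main step are correct, and up to that point you follow the same route as the paper: identify $T_\xi S^{[n]}$ with $\Gamma(\xi,T_S)=\bigoplus_j T_{x_j}S$ (the orbit being reduced) and compute $T_\xi f^{[n]}$ as the cyclic operator $\sigma\mapsto T_f\circ\sigma\circ f^{-1}$. In fact your treatment of the fixed space is \emph{more} careful than the paper's: the paper only observes that a fixed $\sigma$ is determined by $\sigma(x)$ and concludes directly $\ker(T_\xi f^{[n]}-\id)\cong T_xS$, whereas you correctly isolate the wrap-around constraint and obtain
$$
\ker\left(T_\xi f^{[n]}-\id\right)\cong\ker\left(T_x(f^n)-\id\right).
$$

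The gap is in your conclusion. A periodic point of order $n$ satisfies $f^n(x)=x$ and $f^k(x)\neq x$ for $0<k<n$; this does \emph{not} make $f^n$ the identity automorphism of $S$, so your parenthetical justification ``the orbit being a genuine free orbit of length $n$, with $f^n$ fixing the whole surface'' is a non sequitur, and nothing in the hypotheses forces $T_x(f^n)=\id_{T_xS}$. Your geometric argument fails at exactly the same point: $\{y,f(y),\dots,f^{n-1}(y)\}$ is fixed by $f^{[n]}$ only when $f^n(y)=y$, so the orbit map carries into the fixed locus only the $n$-periodic points near $x$, not a whole neighbourhood of $x$. The step cannot be repaired from the stated hypotheses alone: on $S=E\times E$ with $E=\IC/\IZ[i]$, let $f$ be multiplication by $i$ (of order $4$) and $x=(1/2,1/2)$; then $x$ is periodic of order $2$, but $f^2=-\id$, so $T_x(f^2)=-\id$ and your own (correct) main step gives $\ker(T_\xi f^{[2]}-\id)=0$, i.e.\ $\xi$ is a non-degenerate isolated fixed point. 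What your argument genuinely proves is $\dim\ker(T_\xi f^{[n]}-\id)=\dim\ker(T_x(f^n)-\id)$; the value $2$ requires the extra hypothesis $T_x(f^n)=\id$ (e.g.\ $f^n=\id$, as for the torus involution in the paper's example) --- a hypothesis which, it must be said, the paper's own proof also uses tacitly when it passes from ``$\sigma$ is determined by $\sigma(x)$'' to an isomorphism with all of $T_xS$.
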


\begin{proof}
Soit $\sigma\in\Gamma(\xi,T_S)$ un vecteur tangent. Si $\sigma=T_\xi f^{[n]}(\sigma)$, alors pour tout $i=0,\ldots,n$ on a
$$
\sigma(f^i(x))=(T_f\circ\sigma\circ f^{-1})(f^i(x))=(T_{f^{i-1}(x)}f)(\sigma(f^{i-1}(x)))
$$
donc $\sigma$ est enti\`erement caract\'eris\'e par $\sigma(x)\in T_x S$, donc $\ker(T_\xi f^{[n]}-\id)\cong T_x S$.
\end{proof}

\begin{remark}
Si $f$ est d'ordre $p$ premier et si $n<p$, $f^{[n]}$ n'a pas de point fixe $\xi\in~S^{[n]}$ contenant des orbites de points p\'eriodiques d'ordre $k$ tel que $2\leq k\leq n$. Par contre, si $f$ est d'ordre fini et admet des points p\'eriodiques d'ordre $n$, la proposition~\ref{prop:periodique} montre que leur lieu dans $S^{[n]}$ est contenu dans une surface fixe.
\end{remark}

\subsubsection{Points fixes \'epais}

Soit $x\in S$ et $B_n(x):=\rho^{-1}(n\cdot x)$ la sous-vari\'et\'e des sous-espaces \'epais port\'es en $x$.  C'est une sous-vari\'et\'e irr\'eductible de dimension $n-1$  (voir Brian\c{c}on \cite{Briancon}).

Nous supposons \`a partir de maintenant que l'automorphisme $f$ de $S$ est d'ordre fini. Si $x$ est un point fixe de $f$, on peut donc lin\'eariser l'action au voisinage de $x$, ce qui ram\`ene l'\'etude des sous-espaces \'epais fixes par $f^{[n]}$ au cas o\`u $S=\IC^2$, $x=(0,0)\in\IC^2$ et $f$ est une application lin\'eaire d'ordre fini. Choisissons un syst\`eme de coordonn\'ees $X,Y$ dans lequel la matrice de $f$ est diagonale et notons $\varepsilon_1,\varepsilon_2$ ses valeurs propres (ce sont des racines de l'unit\'e). Si $\xi$ est un sous-espace port\'e en $(0,0)$, on distingue deux cas simples : 
\begin{itemize}
\item l'id\'eal $I_\xi$ est \emph{monomial},
\item l'id\'eal $I_\xi$ est \emph{curvilin\'eaire}.
\end{itemize}
Le nombre d'id\'eaux monomiaux est \'egal au nombre de partitions de l'entier $n$ tandis que les points curvilin\'eaires sont denses dans $B_n(x)$. Si $n\leq 3$, tout point \'epais est de l'un de ces types.

\begin{proposition} 
Si $f$ est d'ordre fini, tout sous-espace \'epais d'id\'eal monomial est fixe sous $f^{[n]}$.
\end{proposition}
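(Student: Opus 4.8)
Le plan est de traduire l'assertion géométrique ``$\xi$ est fixe sous $f^{[n]}$'' en une assertion purement algébrique sur l'idéal $I_\xi$, puis d'exploiter le fait qu'un idéal monomial se décompose en somme directe de droites propres pour l'action de $f$. D'après les réductions déjà faites, je me placerais dans le cas $S=\IC^2$, $x=(0,0)$, avec $f$ linéaire diagonale, de sorte que sur les fonctions $f^\#(X)=\varepsilon_1 X$ et $f^\#(Y)=\varepsilon_2 Y$, les $\varepsilon_i$ étant des racines de l'unité.

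Le point-clé est d'identifier l'idéal du sous-espace image. Comme $f^{[n]}(\xi)=f(\xi)$ et comme le calcul différentiel ci-dessus fournit la relation $(f^\#)^{-1}(\cI_\xi)=\cI_{f(\xi)}$ (voir la discussion précédant (\ref{tangent7})), le sous-espace $\xi$ est fixe sous $f^{[n]}$ si et seulement si $(f^\#)^{-1}(I_\xi)=I_\xi$, ce qui, $f^\#$ étant un automorphisme de l'anneau, équivaut à $f^\#(I_\xi)=I_\xi$. J'observerais ensuite que $f^\#$ agit sur chaque monôme par un scalaire, à savoir $f^\#(X^aY^b)=\varepsilon_1^a\varepsilon_2^b\,X^aY^b$, de sorte que chaque droite $\IC\cdot X^aY^b$ est stable par $f^\#$. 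Or un idéal monomial est, par définition, engendré par des monômes, donc en tant que sous-espace vectoriel il est la somme directe des droites $\IC\cdot X^aY^b$ qu'il contient ; chacune de ces droites étant stable par $f^\#$, on conclut aussitôt $f^\#(I_\xi)=I_\xi$, d'où $f(\xi)=\xi$.

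Je ne m'attends pas à une véritable difficulté ici : l'énoncé est essentiellement une conséquence formelle de l'homogénéité de $I_\xi$ pour la graduation par les monômes. Le seul soin à prendre est de fixer correctement la correspondance entre l'action de $f$ sur les points et son action (par image réciproque) sur les idéaux, afin de ne pas se tromper de sens, et de bien noter que ``idéal monomial'' signifie précisément ``somme de droites propres'' pour l'action diagonale. Ce dernier point éclaire d'ailleurs le contraste attendu avec la suite de l'étude : un idéal curviligne n'est en général pas homogène pour cette graduation, et n'a donc aucune raison d'être stable par $f^\#$.
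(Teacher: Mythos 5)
Votre démonstration est correcte et suit essentiellement la même démarche que celle de l'article : après la réduction (déjà faite dans le texte) au cas $S=\IC^2$ avec $f$ linéaire diagonale, l'invariance de l'idéal monomial découle du fait que chaque monôme est vecteur propre de $f^\#$. Vous explicitez simplement ce que l'article résume par \og l'idéal $I_\lambda$ est clairement invariant par $f$ \fg{}, en précisant la correspondance $I_{f(\xi)}=(f^\#)^{-1}(I_\xi)$, ce qui est un utile surcroît de rigueur mais pas une approche différente.
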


\begin{proof}
Soit $\lambda=(\lambda_1\geq\cdots\geq\lambda_k)$ une partition de $n$. Notons son diagramme de Young par $D(\lambda):=\{(i,j)\in\IN\times\IN\,|\, i<\lambda_{j+1}\}$.  L'id\'eal monomial de partition $\lambda$ s'\'ecrit $I_\lambda=\langle X^iY^j\,|\,(i,j)\notin D(\lambda)\rangle$. Puisque $f$ agit diagonalement dans ce syst\`eme de coordonn\'ees, l'id\'eal $I_\lambda$ est clairement invariant par $f$.
\end{proof}

En conservant les notations de la d\'emonstration, une base du quotient $\IC[X,Y]/I_\lambda$ est constitu\'ee des mon\^omes $X^uY^v$ tels que $(u,v)\in D(\lambda)$. Un syst\`eme minimal de g\'en\'erateurs de $I_\lambda$ parmi les $X^iY^j$ tels que $(i,j)\notin D(\lambda)$ se trouve en ne conservant que les indices $(i,j)$ figurant \`a une \emph{marche} du bord du domaine $(\IN\times\IN)\setminus D(\lambda)$. Notons $G(\lambda)$ cet ensemble d'indices. Un vecteur tangent au point d\'efini par cet id\'eal correspond \`a un morphisme $\varphi\in\Hom_{\IC[X,Y]}(I_\lambda,\IC[X,Y]/I_\lambda)$ et est donc donn\'e par une matrice $(\alpha^{u,v}_{i,j})$ d\'efinie par $\varphi(X^iY^j)=\sum_{u,v}{\alpha^{u,v}_{i,j}X^uY^v}$ pour $(i,j)\in D(\lambda)$ et $(u,v)\in G(\lambda)$ (les coefficients $\alpha^{u,v}_{i,j}$ sont soumis \`a des relations venant des relations entre les g\'en\'erateurs de $I_\lambda$). La diff\'erentielle de $f^{[n]}$ en $\varphi$, calcul\'ee par la formule (\ref{tangent7}), envoie la matrice $(\alpha^{u,v}_{i,j})$ sur $(\varepsilon_1^{i-u}\varepsilon_2^{j-v}\alpha^{u,v}_{i,j})$. S'il n'existe pas d'indices $(i,j)\in G(\lambda)$ et $(u,v)\in D(\lambda)$ tels que $\varepsilon_1^{i-u}\varepsilon_2^{j-v}=1$, alors le point fixe est non d\'eg\'en\'er\'e et isol\'e.

\bigskip

Etudions maintenant les points \'epais curvilin\'eaires. Pour $n=2$, ils sont de la forme $I_{(\lambda:\mu)}:=\langle \lambda x+\mu y,x^2,y^2\rangle$ pour $(\lambda:\mu)\in\IP^1_\IC\setminus\{(1:0),(0:1)\}$ (les points exclus donnent les deux id\'eaux monomiaux de la fibre exceptionnelle). Alors $(f^\#)^{-1}(I_{(\lambda:\mu)})=I_{(\varepsilon_1^{-1}\lambda:\varepsilon_2^{-1}\mu)}$ est fixe si et seulement si $\varepsilon_1=\varepsilon_2$, auquel cas tous les id\'eaux curvilin\'eaires de longueur deux sont fixes. Pour $n\geq 3$, ils sont de la forme $I_{\underline{\alpha}}:=\langle y+\alpha_1 x+\cdots+\alpha_{n-1}x^{n-1},x^n\rangle$ (ou avec $x$ et $y$ \'echang\'es), avec $\underline{\alpha}=(\alpha_1,\ldots,\alpha_{n-1})\in\IC^{n-1}$ non nul. On calcule de m\^eme que $(f^\#)^{-1}(I_{\underline{\alpha}})=I_{\underline{\alpha}'}$ avec $\alpha'_i=\alpha_i\varepsilon_2\varepsilon_1^{-i}$. Le point est donc fixe si et seulement si pour chaque indice $i$ tel que $\alpha_i$ est non nul, on a $\varepsilon_2=\varepsilon_1^i$. Ainsi, s'il existe un indice $i\in\{1,\ldots,n-1\}$ tel que $\varepsilon_2=\varepsilon_1^i$, alors tous les points curvilin\'eaires de la forme $\underline{\alpha}=(0,\ldots,0,\alpha_i,0\ldots,0)$ sont fixes. En particulier, un point curvilin\'eaire fixe n'est jamais isol\'e. 

L'\'etude des points fixes constitu\'es de l'orbite d'un point \'epais et plus d\'elicate. Si $f$ est d'ordre premier $p$, pour tout $x\in S$ et tout point \'epais $\xi$ de longueur $\ell$ et de support $x$, l'orbite $\{\xi,f(\xi),\ldots,f^{p-1}(\xi)\}\in S^{[p\ell]}$ est un point fixe non isol\'e : ce lieu de points fixes est de dimension $\ell-1$ si $x$ est fix\'e et $\ell+1$ si $x$ varie dans $S$.

\begin{example}
Si $S$ est un tore complexe et $f$ l'involution $f(x)=-x$, elle admet exactement $16$ points fixes isol\'es dont l'action locale en chacun d'eux a pour valeurs propres $\varepsilon_1=\varepsilon_2=-1$. Donc tous les points \'epais de longueur deux sont fixes sous $f^{[2]}$ dans $S^{[2]}$. Consid\'erons la surface de $S^{[2]}$ constitu\'ee des orbites $\{x,f(x)\}$ lorsque $x$ n'est pas un point fixe de $f$. Sa fermeture contient donc les $16$ courbes de points \'epais port\'es en les points fixes de $f$, donc s'identifie \`a la surface de Kummer $K^2(S)$, d\'efinie comme la fibre au-dessus de $0$ de la compos\'ee $S^{[2]}\xrightarrow{\rho}S^{(2)}\xrightarrow{+}S$, qui est la r\'esolution minimale du quotient $S/f$. Notons que les points fixes de $f^{[2]}$ de la forme $\{x,x'\}$ o\`u $x$ et $x'$ sont deux points fixes distincts de $f$ restent en-dehors et donnent $120$ points fixes isol\'es.
\end{example}

\begin{example}
Soit $S$ une surface K3 alg\'ebrique admettant un automorphisme symplectique (\ie laissant invariante la forme symplectique) $f$ d'ordre premier $p$ valant $3$, $5$ ou $7$. D'apr\`es Nikulin \cite{Nikulin}, $f$ admet un nombre fini $m_p$ de points fixes isol\'es valant respectivement $6$, $4$ et $3$. Consid\'erons l'action de $f^*$ sur $H^2(S,\IC)$ : notons $a_p$ la multiplicit\'e de la valeur propre $1$ et $b_p$ celle de $\xi^i$, $i=1,\ldots,p-1$, o\`u $\xi$ est une racine primitive $p$-i\`eme de l'unit\'e (elles ont toutes la m\^eme multiplicit\'e). D'apr\`es Garbagnati\&Sarti \cite[Proposition 1.1]{GaSa}, leurs valeurs sont : $a_3=10, b_3=6$, $a_5=6,b_5=4$, $a_7=4,b_7=3$. L'action de $f^{[2]}$ sur $S^{[2]}$ n'a que des points fixes isol\'es et non d\'eg\'en\'er\'es (l'action locale de $f$ a pour valeurs propres $\varepsilon_1=\xi$ et $\varepsilon_2=\bar{\xi}$), leur nombre est $\frac{m_p(m_p-1)}{2}+2m_p$, respectivement $27$, $14$ et $9$ (paires de points fixes et points \'epais monomiaux de multiplicit\'e deux), nombre que l'on obtient encore avec la formule de Lefschetz en utilisant la proposition \ref{prop:traceFock}. Par contre, pour $n=3$ et $p=5$, la m\^eme formule donne $36$ points fixes isol\'es : on en compte en effet $4$ form\'es de triplets de points fixes, $24$ form\'es d'un point fixe r\'eduit et d'un point fixe double, mais parmi les $12$ points triples monomiaux, $4$ sont d\'eg\'en\'er\'es (ceux associ\'es \`a la partition $(1,1,1)$) et il y a des composantes de points curvilin\'eaires fixes.
\end{example}

\bibliographystyle{smfplain}
\bibliography{BiblioAutHilb}

\providecommand{\bysame}{\leavevmode ---\ }
\providecommand{\og}{``}
\providecommand{\fg}{''}
\providecommand{\smfandname}{\&}
\providecommand{\smfedsname}{\'eds.}
\providecommand{\smfedname}{\'ed.}
\providecommand{\smfmastersthesisname}{M\'emoire}
\providecommand{\smfphdthesisname}{Th\`ese}
\begin{thebibliography}{10}

\bibitem{BeauvilleKaehler}
{\scshape A.~Beauville} -- {\og {Some remarks on K\"ahler manifolds with $c\sb
  1=0$.}\fg}, {Classification of algebraic and analytic manifolds, Proc. Symp.,
  Katata/Jap. 1982, Prog. Math. 39, 1-26 (1983).}, 1983.

\bibitem{Beauvillec1nul}
\bysame , {\og Vari\'et\'es {K}\"ahleriennes dont la premi\`ere classe de
  {C}hern est nulle\fg}, \emph{J. Differential Geom.} \textbf{18} (1983),
  no.~4, p.~755--782.

\bibitem{BM}
{\scshape S.~Bochner {\normalfont \smfandname} D.~Montgomery} -- {\og {Groups
  on analytic manifolds.}\fg}, \emph{Ann. Math.} \textbf{48} (1947), no.~2,
  p.~659--669.

\bibitem{BS}
{\scshape S.~Boissi\`ere {\normalfont \smfandname} A.~Sarti} -- {\og
  Automorphismes de l'espace de {D}ouady de points sur une surface {K3}\fg},
  pr\'epublication.

\bibitem{Briancon}
{\scshape J.~Brian{\c{c}}on} -- {\og Description de {${\rm
  Hilb}\sp{n}\IC\{x,y\}$}\fg}, \emph{Invent. Math.} \textbf{41} (1977), no.~1,
  p.~45--89.

\bibitem{dCM}
{\scshape M.~A. de~Cataldo {\normalfont \smfandname} L.~Migliorini} -- {\og The
  {D}ouady space of a complex surface\fg}, \emph{Adv. Math.} \textbf{151}
  (2000), no.~2, p.~283--312.

\bibitem{Cheah}
{\scshape J.~Cheah} -- {\og On the cohomology of {H}ilbert schemes of
  points\fg}, \emph{J. Algebraic Geom.} \textbf{5} (1996), no.~3, p.~479--511.

\bibitem{Douady}
{\scshape A.~Douady} -- {\og {Le probl\`eme des modules pour les sous-espaces
  analytiques compacts d'un espace analytique donn\'e.}\fg}, \emph{Ann. Inst.
  Fourier} \textbf{16} (1966), no.~1, p.~1--95.

\bibitem{EisenbudHarris}
{\scshape D.~Eisenbud {\normalfont \smfandname} J.~Harris} -- \emph{The
  geometry of schemes}, Graduate Texts in Mathematics, vol. 197,
  Springer-Verlag, New York, 2000.

\bibitem{Fogarty1}
{\scshape J.~Fogarty} -- {\og Algebraic families on an algebraic surface\fg},
  \emph{Amer. J. Math} \textbf{90} (1968), p.~511--521.

\bibitem{Fogarty2}
\bysame , {\og Algebraic families on an algebraic surface. {II}. {T}he {P}icard
  scheme of the punctual {H}ilbert scheme\fg}, \emph{Amer. J. Math.}
  \textbf{95} (1973), p.~660--687.

\bibitem{Fujiki}
{\scshape A.~Fujiki} -- {\og {Countability of the Douady space of a complex
  space.}\fg}, \emph{Jap. J. Math., New Ser.} \textbf{5} (1979), p.~431--447.

\bibitem{GaSa}
{\scshape A.~Garbagnati {\normalfont \smfandname} A.~Sarti} -- {\og Symplectic
  automorphisms of prime order on {$K3$} surfaces\fg}, \emph{J. Algebra}
  \textbf{318} (2007), no.~1, p.~323--350.

\bibitem{Godement}
{\scshape R.~Godement} -- \emph{Introduction {\`a} la th{\'e}orie des groupes
  de lie}, {Publications Math\'ematiques de l'Universit\'e Paris VII}, 1982.

\bibitem{Goettsche}
{\scshape L.~G{\"o}ttsche} -- {\og The {B}etti numbers of the {H}ilbert scheme
  of points on a smooth projective surface\fg}, \emph{Math. Ann.} \textbf{286}
  (1990), no.~1-3, p.~193--207.

\bibitem{GS}
{\scshape L.~G{\"o}ttsche {\normalfont \smfandname} W.~Soergel} -- {\og
  Perverse sheaves and the cohomology of {H}ilbert schemes of smooth algebraic
  surfaces\fg}, \emph{Math. Ann.} \textbf{296} (1993), no.~2, p.~235--245.

\bibitem{Grothendieck}
{\scshape A.~Grothendieck} -- {\og Techniques de construction et th\'eor\`emes
  d'existence en g\'eom\'etrie alg\'ebrique. {IV}. {L}es sch\'emas de
  {H}ilbert\fg}, in \emph{S\'eminaire {B}ourbaki, {V}ol.\ 6}, Soc. Math.
  France, Paris, 1995, p.~Exp.\ No.\ 221, 249--276.

\bibitem{Huybrechts}
{\scshape D.~Huybrechts} -- {\og Compact hyper-{K}\"ahler manifolds: basic
  results\fg}, \emph{Invent. Math.} \textbf{135} (1999), no.~1, p.~63--113.

\bibitem{Kerner}
{\scshape H.~Kerner} -- {\og {\"Uber die Automorphismengruppen kompakter
  komplexer R\"aume.}\fg}, \emph{Arch. Math.} \textbf{11} (1960), p.~282--288.

\bibitem{LehnMontreal}
{\scshape M.~Lehn} -- {\og {Lectures on Hilbert schemes.}\fg}, {Hurtubise,
  Jacques (ed.) et al., Algebraic structures and moduli spaces. Proceedings of
  the CRM workshop, Montr\'eal, Canada, July 14-20, 2003. Providence, RI:
  American Mathematical Society (AMS). CRM Proceedings \&amp; Lecture Notes 38,
  1-30 (2004).}, 2004.

\bibitem{LS2}
{\scshape M.~Lehn {\normalfont \smfandname} C.~Sorger} -- {\og The cup product
  of {H}ilbert schemes for {$K3$} surfaces\fg}, \emph{Invent. Math.}
  \textbf{152} (2003), no.~2, p.~305--329.

\bibitem{McMullen}
{\scshape C.~T. McMullen} -- {\og Dynamics on {$K3$} surfaces: {S}alem numbers
  and {S}iegel disks\fg}, \emph{J. Reine Angew. Math.} \textbf{545} (2002),
  p.~201--233.

\bibitem{Nakajima}
{\scshape H.~Nakajima} -- {\og Heisenberg algebra and {H}ilbert schemes of
  points on projective surfaces\fg}, \emph{Annals of math.} \textbf{145}
  (1997), p.~379--388.

\bibitem{Nieper}
{\scshape M.~Nieper-Wi{\ss}kirchen} -- \emph{Chern numbers and
  {R}ozansky-{W}itten invariants of compact hyper-{K}\"ahler manifolds}, World
  Scientific Publishing Co. Inc., River Edge, NJ, 2004.

\bibitem{Nikulin}
{\scshape V.~V. Nikulin} -- {\og Finite groups of automorphisms of
  {K}\"ahlerian {$K3$} surfaces\fg}, \emph{Trudy Moskov. Mat. Obshch.}
  \textbf{38} (1979), p.~75--137.

\bibitem{Steenbrink}
{\scshape J.~H.~M. Steenbrink} -- {\og Mixed {H}odge structure on the vanishing
  cohomology\fg}, in \emph{Real and complex singularities ({P}roc. {N}inth
  {N}ordic {S}ummer {S}chool/{NAVF} {S}ympos. {M}ath., {O}slo, 1976)}, Sijthoff
  and Noordhoff, Alphen aan den Rijn, 1977, p.~525--563.

\end{thebibliography}

\end{document}